\newtheorem{theorem}{Theorem}
\newtheorem{lemma}{Lemma}
\newtheorem{proposition}{Proposition}
\newtheorem{corollary}{Corollary}
\newcommand{\ba}{\boldsymbol{a}}
\newcommand{\bb}{\boldsymbol{b}}
\newcommand{\bc}{\boldsymbol{c}}
\newcommand{\bd}{\boldsymbol{d}}
\newcommand{\p}{\boldsymbol{p}}
\newcommand{\x}{\boldsymbol{x}}
\newcommand{\q}{\boldsymbol{q}}
\newcommand{\cc}{\boldsymbol{c}}
\newcommand{\R}{\mathbb{R}}
\newcommand{\Z}{\mathbb{Z}}
\DeclareMathOperator{\conv}{conv}
\DeclareMathOperator{\Tv}{Tv}
\DeclareMathOperator{\He}{H}
\renewcommand{\Re}{\mathbb{R}}
\def\P{\ensuremath{\mathcal{P}}}
\newcommand{\remark}[1]
{
\begin{adjustwidth}{0.1in}{0.0in}
\textbf{Remark: }
#1 
\end{adjustwidth}
}
\newcommand{\mystate}[1]{
\vspace{0.1in}
\centerline{\begin{minipage}[t]{0.9 \textwidth}
#1 \end{minipage}}
}
\begin{document}

\title{Tverberg theorems over discrete sets of points}

\author{J. A. De Loera}
\address{Department of Mathematics, University of California, Davis}
\email{deloera@math.ucdavis.edu, tahogan@math.ucdavis.edu}
\author{T. A. Hogan}
\author{F. Meunier}
\address{CERMICS, Ecole Nationale des Ponts et Chauss\'ees, Univ. Paris-Est, France}
\email{frederic.meunier@enpc.fr}
\author{N. H. Mustafa}
\address{Laboratoire d'Informatique Gaspard-Monge, Univ. Paris-Est, ESIEE Paris, Marne-la-Vallée, France}
\email{mustafan@esiee.fr}

\subjclass[2010]{Primary: 52A35, 52C05, 52C07}

\begin{abstract} This paper discusses Tverberg-type theorems with coordinate constraints (i.e., versions of 
these theorems where all points lie within a subset $S \subset \mathbb{R}^d$ and the intersection of convex hulls is required to have a non-empty intersection
with $S$). We determine the $m$-Tverberg number, when $m \geq 3$, of any discrete subset $S$ of $\mathbb{R}^2$ (a generalization of an unpublished result of J.-P. Doignon).
We also present improvements on the upper bounds for the Tverberg numbers of $\mathbb{Z}^3$ and $\mathbb{Z}^j \times \mathbb{R}^k$
and an integer version of the well-known positive-fraction selection lemma of J. Pach.
\end{abstract}

\maketitle
\section*{Introduction}

Consider $n$ points in $\R^d$ and a positive integer $m\geq 2$. If $n\geq (m-1)(d+1)+1$, the points can always be partitioned into 
$m$ subsets whose convex hulls contain a common point. This is the celebrated theorem of Tverberg \cite{Tverberg:1966tb}, which 
has been the topic of many generalizations and variations since it was first proved in 1966 \cite{barany+soberonsurvey,jesus+xavier+frederic+nabil}. 
In this paper we focus on new versions of Tverberg-type theorems where some of the coordinates of the points are restricted to discrete subsets of a Euclidean space.  The associated  discrete Tverberg numbers are much harder to compute than their classical real-version counterparts
(see for instance the complexity discussion of \cite{Onn:1991wz}). 

We begin our work remembering the following unpublished Tverberg-type result of Doignon.
Consider $n$ points with coordinates in $\Z^2$ and a positive integer $m\geq 3$. If $n\geq 4m-3$, then 
the points can be partitioned into $m$ subsets whose convex hulls contain a common point in $\Z^2$.
According to Eckhoff~\cite{Eckhoff:2000jw} this result was stated by Doignon in a conference.
  
A partition of points where the intersection of the convex hulls contains at least one lattice point is called an {\em integer $m$-Tverberg partition} and 
such a common point is an {\em integer Tverberg point} for that partition.  
Regarding the case $m=2$,  the integer $2$-Tverberg partitions are  called {\em integer Radon partitions}. Any configuration of at least six 
points in $\mathbb{Z}^2$ admits an integer Radon partition. This was proved by Doignon in his PhD thesis~\cite{doignonthesis} and later discovered independently by Onn~\cite{Onn:1991wz}. All these values for $\mathbb{Z}^2$ are optimal as shown by following examples. The $5$-point configuration $\left\{(0,0),(0,1),(2,0)(1,2),(3,2)\right\}$, exhibited by Onn in the cited paper, has no Radon partition. To address the optimality when $m\geq 3$, consider the set $\{(i,i),(i,-i+1)\colon i = -m+2, -m+3 , \ldots, m-2, m-1\}.$ (According to Eckhoff~\cite{Eckhoff:2000jw}, this set was proposed by Doignon during the aforementioned conference.) 
This set has $4m-4$ points and a moment of reflection might convince the reader that it has no integer $m$-Tverberg partition.




More generally, one can define the {\em Tverberg number} $\Tv(S,m)$ for any subset $S$ of $\R^d$ and an integer $m\geq 2$ as
the smallest positive integer $n$ with the following property: Any multiset of $n$ points in $S$ admits a partition into $m$ subsets $A_1,A_2,\ldots, A_m$ with 
\[\left(\bigcap_{i=1}^m\conv(A_i)\right)\cap S\neq\varnothing.\] 
  (Here, by ``partition of a multiset'', we mean 
that each element of a multiset $A$ is contained in a number of submultisets $A_1, \dots, A_m$ so that the sum of its multiplicities in the $A_i$ is equal to its multiplicity in $A$.)  If no such number exists, we say that $\Tv(S,m) = \infty$. Note that Doignon's theorem, together with the discussion that follows, allows us to say 
\[\Tv(\Z^2,m)=\left\{\begin{array}{ll} 6 & \mbox{if $m=2$}, \\ 4m-3 & \mbox{otherwise.}\end{array}\right.\]

\subsection*{Our contributions}

Our first main result generalizes Doignon's theorem. We determine the exact $m$-Tverberg number (when $m$ is at least three) for any discrete subset $S$ of $\R^2$, as considered in \cite{integertverberg2017}. Before stating this result we recall the \emph{Helly number $\He(S)$} of a discrete subset $S$ of $\R^d$ as the smallest positive integer with the following property: Suppose $\mathcal{F}$ is a finite family of convex sets in $\mathbb{R}^d$, and that $\cap \mathcal{G}$ intersects $S$ in at least one point for every subfamily $\mathcal{G}$ of $\mathcal{F}$ having at most $\He(S)$ members. Then $\cap \mathcal{F}$ intersects $S$ in at least one point. If no such integer exists, 
we say that $\He(S) = \infty$. 

Then we have the following theorem. (The theorem is stated for $S$ with finite Helly number, as any $S \subset \R^d$ with $\He(S) = \infty$ has $\Tv(S,m) = \infty$ for all $m \geq 2$~\cite{hellylevi}.)

\begin{theorem}~\label{Stverberg}

Suppose $S$ is a discrete subset of $\R^2$ with $\He(S) < \infty$. 
If $m \geq 3$, then $$\Tv(S,m) = \He(S)(m-1) + 1.$$
Regarding the case $m = 2$, if $\He(S) \leq 3$, then $$\Tv(S,2) = \He(S) + 1,$$ and if $\He(S) \geq 4$, then $$\He(S) + 1 \leq \Tv(S,2) \leq \He(S) + 2,$$

and both values are possible.  

\end{theorem}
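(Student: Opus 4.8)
\emph{Proof plan.} Write $h=\He(S)$; the degenerate cases $h\leq 2$ (where $|S|\leq 1$ or $S$ is collinear) follow at once from one‑dimensional convexity, so I focus on $h\geq 3$. For the \textbf{lower bound} $\Tv(S,m)\geq h(m-1)+1$ (every $m\geq2$) the plan is to exhibit one bad configuration. Since $\He(S)=h<\infty$, there is a finite family $\mathcal F$ of convex sets with $\bigcap\mathcal F\cap S=\varnothing$ while every $\leq(h-1)$-member subfamily has a common point of $S$; taking $\mathcal F$ inclusion‑minimal, the definition of the Helly number forces $\mathcal F=\{H_1,\dots,H_h\}$ to have exactly $h$ sets. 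Pick $q_j\in\bigl(\bigcap_{i\neq j}H_i\bigr)\cap S$ for each $j$; then $q_j\notin H_j$ (otherwise $q_j\in\bigcap_iH_i\cap S$), so $q_i\in H_j\iff i\neq j$ and the $q_j$ are distinct. Let $X$ consist of $m-1$ copies of each $q_j$, so $|X|=h(m-1)$. If $X=A_1\cupdot\cdots\cupdot A_m$ were an $m$-Tverberg partition with common point $p\in S$, pick $i_0$ with $p\notin H_{i_0}$; as there are only $m-1$ copies of $q_{i_0}$, some part $A_k$ (necessarily nonempty, since $p\in\conv(A_k)$) contains none of them, so $A_k\subseteq\{q_j:j\neq i_0\}\subseteq H_{i_0}$ and hence $p\in\conv(A_k)\subseteq H_{i_0}$, a contradiction.

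For the \textbf{upper bounds} the key device is to promote Helly's hypothesis to a deep point of $S$. When $m=2$, first note the convex‑position reduction: if $X\subseteq S$ has no Radon partition in $S$ then $X$ is a set all of whose points are vertices of $\conv(X)$ (a repeated point, or a point $x\in\conv(X\setminus x)$, makes $(\{x\},X\setminus x)$ a Radon partition with common point $x\in S$). For $X=\{v_1,\dots,v_k\}$ in convex position I would apply the Helly hypothesis to the $k$ sets $\conv(X\setminus v_i)$: any $k-1$ of them — hence, if $k\geq h+1$, any $h$ of them — share the omitted vertex, so $\bigcap_i\conv(X\setminus v_i)$ contains a point $q^*\in S$, and this ``$1$-shaved'' region is nonempty once $k\geq4$ (it contains the diagonal crossing of any convex quadrilateral among the $v_i$). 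When $h\leq3$, taking $|X|=h+1$ gives (for $h=3$) a convex quadrilateral whose $1$-shaved region is exactly the crossing $c$ of its diagonals, so Helly forces $c\in S$ and the two diagonals form a Radon partition with common point $c$; hence $\Tv(S,2)=h+1$. When $h\geq4$, taking $|X|=h+2\geq6$ gives $q^*\in S$ in the $1$-shaved region of a convex $(h+2)$-gon, and the remaining step is the geometric lemma that every point of that region is a $2$-Tverberg point of $X$ (lies in $\conv(A)\cap\conv(B)$ for a partition of the vertices into two cyclically interleaving arcs of size $\geq3$), giving $\Tv(S,2)\leq h+2$. That $h+1$ need not suffice is explained by the $k=5$ case: for a convex pentagon the $1$-shaved region is the small pentagon bounded by the five diagonals, so an $S$-point strictly inside it lies on no diagonal and is the common point of no partition; instantiating this (for $h=4$, $S=\Z^2$, this is Onn's five‑point set) gives $\He(S)=h$ with $\Tv(S,2)=h+2$, while a suitable ``strip/staircase'' $S$, in which the deep point can always be caught on a diagonal, gives $\Tv(S,2)=h+1$ — so both values occur.

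For $m\geq3$ the plan for $\Tv(S,m)\leq h(m-1)+1$ is an extremal argument. Given $X\subseteq S$ with $|X|=h(m-1)+1$, let $p\in S$ be realizable as an $r$-fold Tverberg point of $X$ ($X$ splits into $r$ parts each with $p$ in its hull) with $r$ maximal, and suppose $r\leq m-1$. Choose such a partition $X=A_1\cupdot\cdots\cupdot A_r\cupdot L$ minimizing $\sum_{i\leq r}|A_i|$; then each $A_i$ is Carath\'eodory‑minimal for $p$, so $|A_i|\leq3$, whence $|L|\geq h(m-1)+1-3r\geq(h-3)(m-1)+1\geq1$, and $L$ cannot be appended as a part, so $p\notin\conv(L)$: fix a line $\ell\ni p$ with $L$ strictly on one side, each $A_i$ then meeting the closed halfplane on the other side. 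Using that $|L|$ is large and that $r$ is maximal \emph{over all of $S$}, I would re‑partition $L$ against one far‑side point from each $A_i$ — this is where the Helly/convex‑position machinery re‑enters — to produce a point of $S$ of depth $r+1$, a contradiction; hence $r\geq m$ and $p$ is an $m$-Tverberg point. (An alternative is induction on $m$ with step $\Tv(S,m)\leq\Tv(S,m-1)+h$ for $m\geq4$, the step extracting via Helly on the sets $\conv(X\setminus F)$, $|F|\leq h$, together with Carath\'eodory, a part $F$ of at most $h$ points with an $(m-1)$-Tverberg point of the rest in $\conv(F)$; the base $m=3$ must then be proved directly, since the step from the possibly‑$(h+2)$ value $\Tv(S,2)$ would be off by one.)

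\textbf{Main obstacle.} The hard part will be precisely that last re‑partitioning (and the parallel interleaving‑arcs lemma for $h\geq4$, $m=2$): converting a \emph{deep} point of $S$ into a genuine $m$-fold Tverberg partition with no loss of a point, and so accounting for exactly why $m\geq3$ recovers the point that $m=2,\ h\geq4$ does not. Soft abstract‑convexity (Eckhoff‑type) recursions will not suffice — they lose the additive constant — so the argument has to be a hands‑on planar one resting on Helly and the combinatorics of convex position.
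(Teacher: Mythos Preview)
Your lower bound is correct and close in spirit to the paper's (the paper cites a lemma giving $h$ points of $S$ in convex position with no other $S$-point in their hull and repeats each $m-1$ times; your Helly-witness construction achieves the same thing). Your $h=3$, $m=2$ quadrilateral/diagonals argument is also essentially the paper's; the paper, however, pushes that same idea through for \emph{all} $m$ when $h=3$ by first applying the classical (real) Tverberg theorem to the $3m-2$ points, then taking a vertex $\q$ of the intersection polygon $\bigcap_i K_i$ and showing, via the four triangles on the endpoints of the two edges meeting at $\q$, that $\q\in S$. You only use this idea at $m=2$.

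The genuine gap is the $m\geq 3$ upper bound for $h\geq 4$. Your extremal scheme (``take $r$ maximal, peel off Carath\'eodory-minimal parts, then re-partition the leftover $L$ against one far-side point from each $A_i$'') is not an argument yet: you have not said what structure in $L$ lets you find a \emph{new} point of $S$ of higher depth, and Helly alone will not produce one --- the $(h-1)$-wise intersections you would write down need not land in $S$ at the right place. Your alternative induction $\Tv(S,m)\leq\Tv(S,m-1)+h$ has exactly the defect you note: the base $m=3$ is the whole difficulty.

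The paper's route is quite different and avoids this impasse entirely. First, it imports an ``$S$-centerpoint'' theorem (Basu--Oertel): any multiset $A\subseteq S$ with $|A|\geq h(m-1)+1$ contains a point $\p\in S$ of half-space depth $m$ in $A$. Second --- and this is the step you are missing --- once you have such a $\p\notin A$, a Birch-style circular labeling builds the partition directly: radially project $A$ onto a circle about $\p$ and label the points $1,\dots,m,1,\dots,m,\dots$ (with a careful modification near one diameter when $\p$ has depth exactly $m$ rather than slightly more), so that every closed half-plane through $\p$ contains all $m$ labels. This uses only $|A|\geq 4m-3$, which holds since $h\geq 4$. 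No induction on $m$, no re-partitioning, and the same device (with a small tweak) handles $m=2$, giving the $h+2$ bound there. Your ``interleaving arcs'' lemma for $m=2$, $h\geq 4$ is heading toward a special case of this, but the labeling argument is both cleaner and what actually drives the $m\geq 3$ case.
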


In particular we present a proof of Doignon's theorem, the special case of Theorem~\ref{Stverberg} where $S = \mathbb{Z}^2$.
%
%

\remark{Theorem~\ref{Stverberg} shows that $S$-Tverberg numbers of planar sets are very closely related to $S$-Helly numbers (see \cite{Ave2013, Averkovetal-tightbounds} and the references there). However, for the case $\He(S) = 4$, the bounds on $\Tv(S,2)$ given above cannot be improved. For example, $S' = \{(0,0),(0,1),(1,0),(1,1)\}$ and $\mathbb{Z}^2$ both have Helly number  four, but $\Tv(\mathbb{Z}^2,2) = 6$, while the pigeonhole principle implies that $\Tv(S',2) = 5$.}
\medskip

Our second main result improves the upper bound on the integer Tverberg numbers for the three-dimensional case $S = \Z^3$.
\begin{theorem} \label{3dZ} The following inequality holds for all $m\geq2$:
\[\Tv(\Z^3, m) \leq 24m- 31.\]
\end{theorem}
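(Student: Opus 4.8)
The plan is to reduce the $\Z^3$ problem to a combination of the classical (real) Tverberg theorem and a careful accounting of how many integer points are needed to guarantee an integer Tverberg point once a real Tverberg partition is in hand. Concretely, I would start from $n = 24m-31$ points in $\Z^3$ and first extract structure by a pigeonhole/colouring argument: partition $\Z^3$ into a bounded number of congruence classes modulo a suitable sublattice (the relevant modulus is governed by the $\Z^3$-Helly number, which is $8$ by the classical Doignon--Bell--Scarf result), so that many of the $n$ points fall into a single class and hence their convex combinations with integer coefficients behave well. The coefficient $24$ in the bound strongly suggests $24 = 8 \cdot 3$ or $24 = 3 \cdot 8$, i.e.\ the $\Z^3$-Helly number $8$ enters multiplicatively, much as $\He(S)$ does in Theorem~\ref{Stverberg}; the $-31$ offset should come from being slightly more careful than the naive $\He(S)(m-1)+1 = 8(m-1)+1 = 8m-7$ would be (that bound is false in dimension $\ge 3$, so the true bound is larger and the argument is lossier).

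The key steps, in order, would be: (1) Apply the real Tverberg theorem to obtain a real $m$-Tverberg partition of a large subset of the points — this needs $(m-1)(d+1)+1 = 4m-3$ points in $\R^3$, so with $24m-31$ points we have a large surplus, roughly $20m-28$ extra points, to spend. (2) Use the extra points together with a Helly-type / caratheodory-type argument in $\Z^3$: the intersection of the $m$ convex hulls is a nonempty convex set, and we want it to contain a lattice point. If it does not, then by the definition of $\He(\Z^3) = 8$ there is a sub-family of $8$ of the defining half-spaces whose intersection already misses $\Z^3$; localize the failure to a bounded region and reassign points to ``thicken'' the hulls until a lattice point is captured. (3) Optimize the bookkeeping: each ``repair'' of one of the $m$ convex bodies costs a bounded number $c$ of spare points, and summing over the $m$ bodies plus the base cost of the real partition yields $c \cdot m + O(1)$; matching this to $24m - 31$ pins down $c = 24$ roughly, with the additive term absorbing the $-31$. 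One should also exploit that once we are working modulo a sublattice, lower-dimensional Tverberg bounds (for $\Z^2$, which are exactly known from Theorem~\ref{Stverberg}/Doignon, and for $\Z^1 = \Z$) can be invoked on the ``degenerate'' directions, peeling off dimensions one at a time.

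I expect the main obstacle to be Step (2)/(3): controlling precisely how many spare points are needed to force the intersection of the convex hulls to swallow a lattice point, rather than merely a real point. The difficulty is that a real Tverberg point can be irrational or sit in a ``thin sliver'' of the intersection arbitrarily far from any lattice point; one must either (a) argue that with enough points the intersection is ``fat'' in all directions — a quantitative Steinitz/rounding argument — or (b) run the Helly machinery on the half-space description, which requires understanding the combinatorics of which $8$-subsets of constraints can fail, and then rerouting points across the partition classes without destroying the partition property. A secondary subtlety is the multiset/partition convention: points may be repeated across the $A_i$, so the ``spare points'' argument must respect that a point's total multiplicity across the blocks equals its multiplicity in the input, which constrains how freely one can reassign. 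I would handle this by first reducing to the case of points in general position modulo the sublattice and then doing the dimension peeling, so that at each stage the sub-problem is either the known planar bound or a one-dimensional pigeonhole, keeping the constants explicit throughout.
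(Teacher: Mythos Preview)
Your plan goes in the wrong direction. You propose to first build a real $m$-Tverberg partition and then try to repair it so that the intersection of the hulls contains a lattice point. The paper does the opposite: it first fixes the integer Tverberg point and then builds the partition around it. Concretely, Lemma~\ref{centerpt} (the integer centerpoint lemma) applied with depth $3m-3$ needs exactly $2^3\bigl((3m-3)-1\bigr)+1 = 24m-31$ points; this is where the constant comes from, not from any $\He(\Z^3)\cdot 3$ heuristic. Having fixed an integer point $\p$ of half-space depth $3m-3$, the paper projects radially to a sphere about $\p$ and greedily peels off $m-2$ Carath\'eodory simplices (sets of size at most $4$) containing $\p$; a half-space count shows this is always possible because each such simplex contributes at most $3$ points to any half-space through $\p$. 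After removing at most $4(m-2)$ points, the remainder still has $\p$ at depth $3$ and has at least $17$ points, and now the Bezdek--Blokhuis result (Lemma~\ref{BB}) supplies the last two blocks of the partition.

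The genuine gap in your proposal is Step~(2)/(3), and you have essentially identified it yourself: given a real Tverberg partition, the common intersection can be an arbitrarily thin sliver containing no lattice point, and there is no mechanism that converts ``spare points'' into lattice points in that sliver. Your suggested route via $\He(\Z^3)=8$ tells you only that if the intersection misses $\Z^3$ then some $8$ of the bounding half-spaces already witness this; it gives you no way to \emph{move} those half-spaces by reassigning points while preserving the partition. The congruence-class / sublattice pigeonhole and the ``dimension peeling'' to $\Z^2$ and $\Z$ are red herrings here: nothing of the sort appears in the argument, and there is no known way to make them yield $24m-31$. The two ingredients you are missing are (i) the integer centerpoint lemma used \emph{before} any partitioning, so that the target lattice point is chosen up front, and (ii) the Bezdek--Blokhuis bipartition lemma, which is what makes the endgame work with only $17$ leftover points and depth $3$.
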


Our third main result is an inequality that will be used to derive improved bounds on $S$-Tverberg numbers when $S$ is a product of a Euclidean space with some subset $S'$ of a Euclidean space. 

\begin{theorem} \label{mixedint}
Let $S' \subset \R^j$ be a subset of a Euclidean space. Then for all positive integers $k$ and all $m\geq 2$, we have
 \[\Tv(S' \times \R^k, m) \leq \Tv(S' , \Tv(\R^k,m)).\]
\end{theorem}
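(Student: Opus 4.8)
The plan is a two-stage ``composition'' argument: use the $S'$-Tverberg number to fix the $S'$-coordinate of the desired Tverberg point, and then use the ordinary Tverberg number of $\R^k$ to take care of the remaining, unconstrained coordinates. Set $M := \Tv(\R^k,m)$ (which is finite, by Tverberg's theorem) and $N := \Tv(S',M)$. If $N = \infty$ there is nothing to prove, so assume $N < \infty$ and let $p_1,\dots,p_N$ be an arbitrary multiset of points of $S' \times \R^k$; write $p_i = (a_i,b_i)$ with $a_i \in S'$ and $b_i \in \R^k$.

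First I would apply the definition of $\Tv(S',M) = N$ to the multiset $a_1,\dots,a_N$ of points of $S'$: there is a partition of it into submultisets $I_1,\dots,I_M$ together with a point $q \in S'$ such that $q \in \conv\{a_i : i \in I_t\}$ for every $t \in \{1,\dots,M\}$. For each $t$, fix nonnegative coefficients $(\lambda^{(t)}_i)_{i \in I_t}$ with $\sum_{i\in I_t}\lambda^{(t)}_i = 1$ and $\sum_{i\in I_t}\lambda^{(t)}_i a_i = q$, and put $c_t := \sum_{i\in I_t}\lambda^{(t)}_i b_i \in \R^k$. The point of this step is the observation that $(q,c_t) = \sum_{i\in I_t}\lambda^{(t)}_i p_i \in \conv\{p_i : i \in I_t\}$: each of these $M$ lifted points already carries the ``correct'' first coordinate $q$.

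Next I would apply the definition of $\Tv(\R^k,m) = M$ to the $M$ points $c_1,\dots,c_M \in \R^k$: there is a partition of $\{1,\dots,M\}$ into $J_1,\dots,J_m$ and a point $z \in \R^k$ with $z \in \conv\{c_t : t \in J_r\}$ for every $r \in \{1,\dots,m\}$. Writing $z = \sum_{t\in J_r}\mu^{(r)}_t c_t$ as a convex combination and using $\sum_{t\in J_r}\mu^{(r)}_t = 1$, one gets $(q,z) = \sum_{t\in J_r}\mu^{(r)}_t(q,c_t) \in \conv\left(\bigcupdot_{t\in J_r}\{p_i : i \in I_t\}\right)$ for every $r$. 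Now set $A_r := \bigcupdot_{t\in J_r} I_t$; since the $I_t$ partition the input multiset and $J_1,\dots,J_m$ partition $\{1,\dots,M\}$, the submultisets $A_1,\dots,A_m$ form a partition of the input multiset, and $(q,z) \in \bigcap_{r=1}^m \conv(A_r)$. Finally, $q \in S'$ forces $(q,z) \in S' \times \R^k$, so $(q,z)$ is a genuine $(S' \times \R^k)$-Tverberg point for this partition, giving $\Tv(S'\times\R^k,m) \le N$, as claimed.

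I do not anticipate a genuine obstacle: the proof is just a clean composition of the two Tverberg-type statements. The only mild care needed is the bookkeeping that composing the partition $J_1,\dots,J_m$ of $\{1,\dots,M\}$ with the partition $I_1,\dots,I_M$ of the input multiset again yields a partition of that multiset, and the (immediate) verification that convex combinations of the lifted points $(q,c_t)$ retain first coordinate $q$ — which holds precisely because every $c_t$ was built from a coefficient vector sending the $a_i$ to $q$.
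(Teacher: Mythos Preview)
Your proof is correct and is essentially identical to the paper's own argument: project to $S'$, apply the $S'$-Tverberg number to get $M$ blocks with a common point $q\in S'$, lift each block to a point $(q,c_t)$ in its convex hull, and then apply the ordinary $\R^k$-Tverberg number to the $c_t$'s. The only difference is cosmetic---you spell out the convex-combination coefficients explicitly where the paper just asserts the existence of the lifted points $q_i$.
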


For example, choosing $S$ of the form $\Z^j \times \R^k$ leads to the ``mixed integer'' case. Then Theorem~\ref{mixedint} implies that for all positive integers $j, k$ and all $m\geq 2$, we have
\[\Tv(\Z^j \times \R^k, m) \leq \Tv(\Z^j , \Tv(\R^k,m)).\]
Moreover, we will use Theorem~\ref{mixedint} to obtain the following bound: 
\begin{equation}~\label{mixedbound}
2^j(m-1)(k+1) + 1 \leq \Tv(\Z^j \times \R^k,m) \leq j2^j(m-1)(k+1) + 1.\end{equation}

%
%

Our fourth main result is a generalization of \emph{Pach's positive-fraction selection lemma}~\cite{P98} (see \cite{karasevetal} for related bounds). 
Here is Pach's result: Given an integer $d$, there exists a constant $c_d$ such that for any set $P$ of $n$ points in $\Re^d$, there exists 
a point $\q \in \Re^d$, and $(d+1)$ disjoint subsets of $P$, say $P_1, \ldots, P_{d+1}$, such that $|P_i| \geq c_{d} \cdot n$ for all $i$ and
the simplex defined by every transversal of $P_1, \ldots, P_{d+1}$ contains $\q$. (By ``transversal", we mean a set containing exactly one element from 
each $P_i$.)


Unfortunately the point $\q$ need not be an integer point; furthermore, the proof uses the so-called ``second selection lemma" that currently does not exist for integer points (see Pach~\cite{P98} and Matou\v{s}ek~\cite[Chapter 9]{M02}). In Section~\ref{IPFSL}, we strengthen the above theorem, such that, 
as a consequence, the theorem now extends to the integer case---indeed, to any scenario where one has points of high half-space depth in the following sense:

Given a finite set $P$ of points in $\Re^d$ and a point $\q \in \Re^d$, we say that $\q$
is \emph{of half-space depth $t$ with respect to $P$} if any half-space containing $\q$
contains at least $t$ points of $P$ (when the context is clear,
we will simply say that $\q$ is of \emph{depth $t$}). Then here is our theorem.

\begin{theorem}
For any integer $d \geq 1$ and real number $\alpha \in (0, 1]$, there exists a constant
$c_{d, \alpha}$ such that the following holds.
For any set $P$ of $n$ points in $\mathbb{R}^d$ and any point $\q \in \Re^d$ of half-space depth at least $\alpha \cdot n$,
there exist $(d+1)$ disjoint subsets of $P$, say $P_1, \ldots, P_{d+1}$, such 
that
  \begin{itemize}
  \item $|P_i| \geq c_{d, \alpha} \cdot n$ for   $i = 1, \ldots, (d+1)$, and
  \item every   simplex defined by a transversal of $P_1, \ldots, P_{d+1}$ contains $\q$.
  \end{itemize}
\label{thm:depthcovering}
\end{theorem}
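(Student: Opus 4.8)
The plan is to follow the structure of Pach's original argument but replace the use of the ``second selection lemma'' (which needs $\q$ to be a centerpoint-type object obtained from a probabilistic/colored Tverberg argument) by the hypothesis that $\q$ already has half-space depth at least $\alpha n$. The starting point is the \emph{first selection lemma}: any set $P$ of $n$ points in $\R^d$ has a point contained in a positive fraction $c_d'$ of all $\binom{n}{d+1}$ simplices spanned by $P$. Here, however, we want the fixed point $\q$ to play that role, so the first step is to show that a point of depth $\alpha n$ is automatically contained in at least $c_{d,\alpha}'\binom{n}{d+1}$ of the simplices spanned by $P$. This follows from a Carath\'eodory/ham-sandwich style counting argument: if $\q$ has depth $\alpha n$, then in every direction the hyperplane through $\q$ has $\ge \alpha n$ points on each side, and a standard averaging over generic directions (or an application of the colorful Carath\'eodory theorem to $d+1$ near-balanced parts) yields a positive fraction of $(d+1)$-subsets whose convex hull contains $\q$.

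Next I would run the ``same-type lemma'' machinery of B\'ar\'any--Valtr (or equivalently the weak $\varepsilon$-net / fractional Helly route Pach uses). Concretely: by the first step there is a $(d+1)$-uniform hypergraph on $P$ with density $\ge c'_{d,\alpha}$ whose edges are the $\q$-containing simplices. By a Ramsey-type / pigeonhole selection inside this dense hypergraph one extracts $d+1$ disjoint subsets $Q_1,\dots,Q_{d+1}$, each of size a constant fraction of $n$, such that a positive fraction of transversals of $Q_1,\dots,Q_{d+1}$ span a $\q$-containing simplex. Then one applies the same-type lemma to these $d+1$ point sets: it produces further subsets $P_i \subseteq Q_i$, still of size $\Omega(n)$ (the constant depends only on $d$ and the number of parts, hence only on $d$), such that \emph{all} transversals of $P_1,\dots,P_{d+1}$ have the same order type. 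Since at least one transversal of the $Q_i$'s contains $\q$, and ``same order type'' together with the combinatorial description of ``$\q \in \conv(\text{transversal})$'' (which depends only on the orientations of the $\binom{d+2}{d+1}$ sub-simplices, i.e.\ on the order type of the transversal together with $\q$) is preserved, \emph{every} transversal of $P_1,\dots,P_{d+1}$ contains $\q$. Setting $c_{d,\alpha}$ to the resulting fraction finishes the proof.

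One technical point to handle carefully: the same-type lemma as usually stated fixes the order type among the points of $\bigcup Q_i$, not among those points \emph{together with} $\q$. To remedy this I would simply add $\q$ as an extra singleton ``part'' $Q_0 = \{\q\}$ before invoking the same-type lemma, so that the guaranteed subsets $P_1,\dots,P_{d+1}$ (and the untouched $Q_0$) have the property that all transversals of $P_0 \cup P_1 \cup \dots \cup P_{d+1}$, i.e.\ all sets $\{\q, p_1,\dots,p_{d+1}\}$, share a common order type; since containment of $\q$ in $\conv(p_1,\dots,p_{d+1})$ is determined by that order type, it holds for all transversals once it holds for one. General position issues (degenerate transversals, $\q$ on a boundary) can be absorbed by a perturbation argument, or handled by using the depth-$\alpha n$ hypothesis to stay strictly inside.

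The main obstacle I anticipate is the first step: cleanly converting ``half-space depth $\ge \alpha n$'' into ``$\q$ lies in a positive fraction of $(d+1)$-subsets of $P$, with the fraction depending only on $d$ and $\alpha$.'' The containment-counting must be uniform and must not secretly reintroduce a second-selection-lemma-type dependence; getting an explicit, dimension-only constant out of the colorful Carath\'eodory or averaging argument is the crux, and everything downstream (the dense-hypergraph extraction and the same-type lemma) is then a black-box application with constants depending only on $d$ and the already-obtained fraction. The remaining steps are essentially bookkeeping to track that all the fraction constants compose into a single $c_{d,\alpha}>0$.
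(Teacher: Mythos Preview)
Your downstream pipeline---dense $(d+1)$-partite hypergraph of $\q$-containing simplices, then weak hypergraph regularity, then the same-type lemma applied with $\q$ adjoined as a singleton part---matches the paper's proof essentially step for step, including the trick of inserting $\{\q\}$ as an extra color so that ``same order type'' controls containment of $\q$.

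Where you diverge from the paper is precisely the step you flagged as the main obstacle: producing the dense hypergraph. The paper does \emph{not} argue directly that a point of half-space depth $\alpha n$ lies in a positive fraction of all $\binom{n}{d+1}$ simplices. Instead it applies Matou\v{s}ek's simplicial partition theorem with parameter $r \approx (4c_d/\alpha)^d$ to obtain a partition $\{P_1,\dots,P_r\}$ of $P$ into $r$ parts of size $\Theta(n/r)$ such that any hyperplane meets $O(r^{1-1/d})$ of the parts; a short counting argument then shows that \emph{every} transversal of this partition contains $\q$ (Lemma~\ref{lemma:depthpartitioning}). From there, Carath\'eodory plus pigeonhole over the $\binom{r}{d+1}$ choices of $(d+1)$ parts singles out indices $i_1,\dots,i_{d+1}$ for which a $\Theta(1)$-fraction of the transversals of $\{P_{i_1},\dots,P_{i_{d+1}}\}$ contain $\q$, yielding the dense $(d+1)$-partite hypergraph you want.

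Your proposed alternatives for this step---``averaging over generic directions'' or ``colorful Carath\'eodory on $d+1$ near-balanced parts''---are not arguments as written. Colorful Carath\'eodory produces a single rainbow simplex, not $\Omega(n^{d+1})$ of them, and the averaging sketch has no clear mechanism for counting containing simplices. The target inequality (half-space depth $\alpha n$ $\Rightarrow$ simplicial depth $\ge c_{d,\alpha}\binom{n}{d+1}$) is true, but it requires a real tool; the paper's choice of simplicial partitions is one clean way to get it, and is the missing ingredient in your outline.
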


\remark{Our proof yields a constant $c_{d, \alpha}$ whose value is exponential in the dimension $d$.}

Note that the existence of integer points of high half-space depth (Lemma~\ref{centerpt}) together with Theorem~\ref{thm:depthcovering} implies the following integer version of the positive-fraction selection lemma.

\begin{corollary}
Let $P$ be a set of $n \geq (d+1)$ points in $\mathbb{Z}^d$. Then there exists a
point $\q \in \mathbb{Z}^d$, and $(d+1)$ disjoint subsets of $P$, say $P_1, \ldots, P_{d+1}$,
such that $|P_i| \geq c_{d, 2^{-d}} \cdot n$ for all $i = 1, \ldots, (d+1)$, and
the simplex defined by every  transversal of $P_1, \ldots, P_{d+1}$ contains $\q$.
\end{corollary}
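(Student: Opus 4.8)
The plan is to obtain the corollary as an immediate consequence of Lemma~\ref{centerpt} together with Theorem~\ref{thm:depthcovering}; no further geometric work is required, one simply has to feed the output of the former into the hypothesis of the latter.

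First I would apply Lemma~\ref{centerpt} to the given set $P \subseteq \Z^d$. Since $|P| = n \geq d+1$, this produces a point $\q \in \Z^d$ whose half-space depth with respect to $P$ is at least $2^{-d}\,n$; that is, every closed half-space containing $\q$ contains at least $2^{-d}\,n$ points of $P$. The one thing to observe here is that $\q$ is genuinely a lattice point rather than merely a point of $\conv(P)$, and this is exactly what Lemma~\ref{centerpt} asserts. This is the single feature distinguishing the integer statement from Pach's original real version, and it is what allows us to avoid the integer analogue of the second selection lemma, which is not available.

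Next I would invoke Theorem~\ref{thm:depthcovering} with $\alpha = 2^{-d}$ and with the point $\q$ just constructed. Its hypothesis — that $\q$ have half-space depth at least $\alpha n = 2^{-d} n$ with respect to $P$ — holds by construction, so the theorem yields $d+1$ pairwise disjoint subsets $P_1, \dots, P_{d+1}$ of $P$ with $|P_i| \geq c_{d, 2^{-d}} \cdot n$ for each $i$, and with the property that the simplex spanned by every transversal of $P_1, \dots, P_{d+1}$ contains $\q$. Since $\q \in \Z^d$, this is precisely the claimed integer positive-fraction selection statement, and the constant is the $c_{d, 2^{-d}}$ furnished by Theorem~\ref{thm:depthcovering}.

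I do not expect any genuine obstacle in this last deduction: the real content lies entirely in the two results it rests upon, namely Lemma~\ref{centerpt} (existence of an integer point whose depth is a fixed fraction $2^{-d}$ of $n$) and, more substantially, Theorem~\ref{thm:depthcovering} itself. The only point meriting a word of care is the small-$n$ regime $d+1 \leq n < 2^d$, where $c_{d,2^{-d}}\cdot n$ may be less than $1$, so that the size bounds only force the $P_i$ to be nonempty; but this case is already subsumed by Theorem~\ref{thm:depthcovering} as stated (one may take each $P_i$ to be an appropriate singleton, using Carath\'eodory's theorem applied to the membership $\q \in \conv(P)$, which holds because $\q$ has positive half-space depth, together with $n \geq d+1$ to supply the required $d+1$ distinct points).
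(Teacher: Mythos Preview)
Your proposal is correct and follows exactly the route the paper indicates: the corollary is stated immediately after the sentence ``the existence of integer points of high half-space depth (Lemma~\ref{centerpt}) together with Theorem~\ref{thm:depthcovering} implies the following integer version,'' and that is precisely what you do---use Lemma~\ref{centerpt} to obtain an integer point $\q$ of depth at least $2^{-d}n$, then feed it into Theorem~\ref{thm:depthcovering} with $\alpha = 2^{-d}$. Your discussion of the small-$n$ regime is more care than the paper itself supplies, but it is sound and does no harm.
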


\remark{In particular, this implies that $\q$ belongs to many \emph{distinct} Tverberg partitions---at
least $\left(\lceil c_{d, 2^{-d}} \cdot n \rceil !\right)^d$  \emph{distinct} Tverberg partitions,
with each such Tverberg partition containing $\left\lceil c_{d, 2^{-d}} \cdot n \right\rceil$ sets.}

\medskip

%
%
%
%
\subsection*{Related  Results and Organization of the paper}
The problem of computing the Tverberg number for $\Z^d$ with $d\geq 3$ seems to be challenging. 
It has been identified as an interesting problem since the 1970's~\cite{problemsinconvexity} and yet the following inequalities 
are almost all that is known about this problem: for the general case, De Loera et al.~\cite{integertverberg2017} proved
\begin{equation}\label{DL}
2^d(m-1)+1\leq\Tv(\Z^d,m)\leq d2^d(m-1)+1, \quad \mbox{for $d\geq 1$ and $m\geq 2$.}
\end{equation} Two special cases get better bounds:
\begin{equation}\label{special}
\Tv(\Z^3,2)\leq 17 \qquad\mbox{and}\qquad 5\cdot 2^{d-2}+1\leq\Tv(\Z^d,2)\quad \mbox{for $d\geq 1$.}
\end{equation}
The left-hand side inequality is due to Bezdek and Blokhuis~\cite{Bezdek2003} and the right-hand side was 
proved by Doignon in his PhD thesis (and rediscovered by Onn).

Previously established bounds for the ``mixed integer'' case include  the bounds for the Radon number ($2$-Tverberg number) found by Averkov and Weismantel~\cite{AW2012}. \[ 2^j(k+1) + 1 \leq \Tv(\Z^j \times \R^k, 2) \leq (j+k)2^j(k+1)- j -k + 2.\]
Later, De Loera et al.~\cite{integertverberg2017} gave the following general bound for all mixed integer Tverberg numbers: 
\[\label{known-mixed} \Tv(\Z^j \times \R^k, m) \leq (j+k)2^j(m-1)(k+1)+ 1.\]
Note that \eqref{mixedbound} above is a simultaneous improvement of both of these.

Previous bounds and related work on more general $S$-Tverberg numbers can also be found in \cite{integertverberg2017}, including the following bound for any discrete $S\subset \R^d$:
$$\Tv(S,m) \leq \He(S)(m-1)d + 1.$$

The following lemma about integer points of high half-space depth is used throughout the paper. See \cite{basu+oertel} for a proof and related results.

\begin{lemma} \label{centerpt} Consider a multiset $A$ of points in $\Z^d$. If $|A|\geq 2^d (m-1)+1$ (counting multiplicities), then there is a point $\q \in \mathbb{Z}^d$ of half-space depth $m$ in $A$.
\end{lemma}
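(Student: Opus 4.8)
The plan is to deduce Lemma~\ref{centerpt} from Doignon's classical theorem that the Helly number of $\Z^d$ equals $2^d$, via the standard translation between half-space depth and membership in convex hulls.

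First I would record the following reformulation (standard in the theory of Tukey depth): for $\q \in \R^d$, the point $\q$ has half-space depth at least $m$ with respect to $A$ if and only if $\q \in \conv(A \setminus A')$ for \emph{every} sub-multiset $A' \subseteq A$ with $|A'| = m-1$. Both implications are a one-line application of the separating hyperplane theorem. If $\q \notin \conv(A\setminus A')$ for some such $A'$, then, since $A\setminus A'$ is nonempty (because $|A|\ge 2^d(m-1)+1 > m-1$) and $\conv(A\setminus A')$ is compact and convex, a hyperplane strictly separating $\q$ from it yields a closed half-space $H$ with $\q\in H$ and $A\cap H\subseteq A'$, so $\q$ has depth at most $m-1$. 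Conversely, if $\q$ lies in some closed half-space $H$ with $|A\cap H|\le m-1$, enlarge $A\cap H$ to a sub-multiset $A'$ of size exactly $m-1$; then $A\setminus A'\subseteq A\setminus H$, so $\conv(A\setminus A')$ is disjoint from $H$ and in particular does not contain $\q$.

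With this in hand, the set $D_m$ of points of half-space depth at least $m$ is the \emph{finite} intersection $\bigcap_{A'\in\binom{A}{m-1}}\conv(A\setminus A')$ of convex sets, and the lemma is precisely the assertion $D_m\cap\Z^d\neq\varnothing$. Since $\He(\Z^d)=2^d$, it is enough to check that every subfamily of size $k\le 2^d$, say $\conv(A\setminus A'_1),\dots,\conv(A\setminus A'_k)$, has a common point of $\Z^d$; this is the only place the size hypothesis is used. Indeed $|A'_1\cup\cdots\cup A'_k|\le k(m-1)\le 2^d(m-1)<|A|$, so there is a point $a\in A$ with $a\notin A'_1\cup\cdots\cup A'_k$, and then $a\in A\setminus A'_j\subseteq\conv(A\setminus A'_j)$ for every $j$ while $a\in A\subseteq\Z^d$. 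Applying the defining property of $\He(\Z^d)$ to the family $\{\conv(A\setminus A'):A'\in\binom{A}{m-1}\}$ then yields a point $\q\in\Z^d$ lying in all of these convex hulls, i.e.\ a point $\q\in\Z^d$ of half-space depth at least $m$ in $A$, as wanted.

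I do not anticipate a genuine obstacle: the real content is imported from Doignon's computation of $\He(\Z^d)$, and everything else is elementary. The one step deserving care is ensuring that the family of convex sets to which the Helly number is applied is finite, which is exactly why one should phrase the depth region using the hulls $\conv(A\setminus A')$ with $|A'|=m-1$ rather than the infinitely many half-spaces appearing in the definition of depth. It is also worth noting that the argument is not special to $\Z^d$: for any $S\subseteq\R^d$ with $\He(S)<\infty$, the same reasoning shows that a multiset $A$ of points of $S$ with $|A|\ge\He(S)(m-1)+1$ admits a point of $S$ of half-space depth at least $m$ in $A$, Lemma~\ref{centerpt} being the case $S=\Z^d$.
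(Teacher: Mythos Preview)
Your argument is correct. The paper does not supply its own proof of this lemma; it simply cites the reference \cite{basu+oertel}. Your deduction from Doignon's theorem $\He(\Z^d)=2^d$ via the reformulation of depth as membership in every $\conv(A\setminus A')$ with $|A'|=m-1$ is the standard one, and is in essence the argument behind the cited result. Your closing remark that the same reasoning works for any $S$ with finite Helly number is also exactly what the paper invokes later, in the proof of Lemma~\ref{helly4}, where it quotes the general version (Theorem~2 of \cite{basu+oertel}) to handle arbitrary discrete $S\subset\R^2$.
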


The paper is organized as follows.
In Section~\ref{TS}, we prove Theorem ~\ref{Stverberg} using a somewhat similar strategy to Birch's proof of the planar case of the original Tverberg theorem~\cite{Birch:1959ii}. In Section~\ref{TZ3}, we prove Theorem~\ref{3dZ} using techniques reminiscent of those in \cite{integertverberg2017}. In Section~\ref{CONS}, we  prove Theorem \ref{mixedint} (adapting an approach by Mulzer and Werner~\cite[Lemma 2.3]{MW13}) and collect some consequences of the main theorems presented above, including (\ref{mixedbound}). Finally, in Section \ref{IPFSL}, we prove Theorem~\ref{thm:depthcovering} by proving a new lemma and adapting the methods of Pach in \cite{P98}.
 
\section{Tverberg Numbers over Discrete Subsets of $\mathbb{R}^2$: Proof of Theorem~\ref{Stverberg}}~\label{TS}
We start with the proof of the special case $S = \Z^2$ because it nicely illustrates the techniques of the more general proof of Theorem~\ref{Stverberg}.
\subsection{Proof of the special case $S = \Z^2$} \label{TZ2}
The theorem will follow easily from the following two lemmas, the first covering the case $m\geq 3$ and the second the case $m=2$.



\begin{lemma}\label{Tplane1}
Consider a multiset $A$ of points in $\Z^2$ with $|A| \geq 4m-3$ and $m \geq 3$. If $\p \notin A$ is a point of depth $m$, then there is an $m$-Tverberg partition of $A$ with $\p$ as Tverberg point. 
\end{lemma}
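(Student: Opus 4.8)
Translate coordinates so that $\p$ is the origin. It suffices to produce $m$ pairwise disjoint sub-multisets $B_1,\dots,B_m$ of $A$ with $\p\in\conv(B_i)$ for every $i$: the unused points of $A$ can then be thrown into $B_1$, and since $\conv$ only grows this already yields the desired partition. So the only hypotheses I use are ``$\p\notin A$, the half-space depth of $\p$ in $A$ is at least $m$, $|A|\ge 4(m-1)+1$, and $m\ge 3$''. In particular the statement is purely about convex position, and integrality plays no role.

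The plan is, in the spirit of Birch's planar argument, to order the points of $A$ angularly around $\p$, say $a_0,a_1,\dots,a_{N-1}$ in cyclic order (ties broken arbitrarily), and to distribute them into $m$ interleaved groups that each wind around $\p$. I would use two elementary reformulations. First, $\p$ has half-space depth at least $m$ in $A$ iff every closed half-plane whose boundary passes through $\p$ contains at least $m$ of the $a_i$ counted with multiplicity; equivalently every closed arc of directions of angular length $\pi$ captures the directions of at least $m$ of them. Second, a sub-multiset $B$ satisfies $\p\in\conv(B)$ iff the directions of its points are not all contained in one open half-plane through $\p$; in particular a block of consecutive $a_i$'s has this property as soon as its first and last points span an angle at least $\pi$.

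With these in hand, the cleanest way to organize the construction is induction on $m$: peel off one group $B_m$ with $\p\in\conv(B_m)$ and $|B_m|$ small, chosen so that the depth of $\p$ in $A\setminus B_m$ drops by at most one, and then recurse; once the depth has dropped to $1$ the remaining points already contain $\p$ in their convex hull and form $B_1$. The budget $4(m-1)+1$ is exactly what makes this close: removing at most four points in each of the $m-1$ steps uses at most $4(m-1)$ points and leaves a nonempty remainder, which, still having depth at least $1$ and not containing $\p$, has at least two points. To find such a $B_m$ I would look near an arc where the points of $A$ are most tightly clustered: a double counting of the angular gaps between consecutive points (they sum to $2\pi$, and each is at most $\pi$ because $\p\in\conv(A)$) shows that, since $N\ge 4(m-1)+1$, windows of consecutive points of small angular span must exist. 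One then takes $B_m$ to consist of a couple of points straddling that cluster together with one or two points from the (necessarily sparse, by the depth bound) nearly antipodal arc, so that $\conv(B_m)\ni\p$ while no ``tight'' half-plane through $\p$ loses more than one point of $A$.

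The main obstacle is precisely this local step: choosing the small set $B_m$ and certifying that the half-space depth of $\p$ decreases by only one. This is where $m\ge 3$ and $|A|\ge 4(m-1)+1$ really enter; for instance, together they forbid a line through $\p$ with exactly $m$ points of $A$ in each closed side, which is the kind of configuration that would obstruct the peeling. It is also the step that requires a genuine case analysis, according to how the points of $A$ distribute among the open arcs cut out by a few lines through $\p$, and including the degenerate cases of points collinear with or antipodal to $\p$ and of repeated points. Everything else is bookkeeping.
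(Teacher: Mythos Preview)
Your route is genuinely different from the paper's: the paper gives a \emph{direct}, non-inductive argument.  After radially projecting $A$ to a circle around $\p$, it assigns labels $1,\dots,m$ to the points by an explicit pattern (with two cases, according to whether $\p$ has depth $m+e$ for a small $e$ depending on $|A|\bmod m$) and verifies that every closed half-plane through $\p$ meets every label class; the label classes are the parts.  There is no peeling and no induction on $m$.

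Your proposal has a real gap, and it is exactly the step you flag as ``the main obstacle''.  After $m-2$ peels of size at most four you are left with $\ge 5$ points and a center $\p$ of depth $\ge 2$, and the last two parts amount to a Radon partition of these points with $\p$ as Radon point.  But five points of depth~$2$ need not admit one: take the vertices of a regular pentagon with center $\p$.  No two vertices are antipodal through $\p$, so any part with at most two points misses $\p$; hence both parts would need $\ge 3$ points, impossible with five.  This is precisely why the paper's companion lemma for $m=2$ needs six points rather than five, and your budget $4(m-1)+1$ delivers only five at that stage.  So without a proven refinement (for instance, that at least one earlier peel can be done with $\le 3$ points while still dropping the depth by at most one) the induction cannot close.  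The earlier peels are also only sketched: ``points straddling a cluster plus one or two from the nearly antipodal arc'' does not, as written, certify that no tight half-plane loses more than one point, and the (true) observation that $|A|\ge 4m-3$ forbids a line through $\p$ with exactly $m$ points in each closed side does not by itself produce such a peel.
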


\begin{lemma}\label{Tplane2}
Consider a multiset $A$ of points in $\Z^2$ with $|A| \geq 6$. If $\p \notin A$ is a point of depth two, then there is a Radon partition of $A$ with $\p$ as Tverberg point. 
\end{lemma}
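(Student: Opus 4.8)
The plan is to work in polar coordinates based at $\p$. Since $\p\notin A$, every $a\in A$ has a well-defined direction $\hat a:=(a-\p)/\lVert a-\p\rVert\in S^1$, and the first thing to record is a dictionary: moving a vertex radially toward or away from $\p$ never changes whether $\p$ lies in a convex hull, so for $B\subseteq A$ one has $\p\in\conv(B)$ if and only if the directions $\{\hat b:b\in B\}$ are not contained in an open semicircle of $S^1$, equivalently every \emph{closed} semicircle (closed arc of length $\pi$) contains at least one of them; and $\p$ has half-space depth $\ge 2$ in $A$ exactly when every closed semicircle contains at least two directions of the multiset $D:=\{\hat a:a\in A\}$ (counted with multiplicity). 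So the lemma reduces to: a multiset $D$ of $n\ge 6$ points of $S^1$ in which every closed semicircle contains two members can be split as $D=D_1\cupdot D_2$ with each $D_i$ meeting every closed semicircle.

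When $n$ is even this is immediate. Enumerate $D$ in circular order $\hat v_1,\dots,\hat v_n$ (ties broken arbitrarily) and let $D_1,D_2$ be the odd- and even-indexed directions, i.e.\ colour the points of $A$ by the parity of their position, giving blocks of size $n/2\ge 3$. Any closed semicircle meets $D$ in a contiguous circular block of indices, which by the depth hypothesis has length at least two and hence contains two consecutive indices, one odd and one even; so each $D_i$ meets every closed semicircle. Unwinding the dictionary gives $\p\in\conv(A_1)\cap\conv(A_2)$, the desired Radon partition; note that no genericity was used.

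For $n$ odd (so $n\ge 7$) strict alternation is blocked by parity, so I would first delete one point. Call $a\in A$ \emph{removable} if $\p$ still has depth $\ge 2$ in $A\setminus\{a\}$ — by the dictionary, if $\hat a$ lies in no closed semicircle meeting $D$ in exactly two directions; given such an $a$, the even case applied to $A\setminus\{a\}$ (which has $n-1\ge 6$ points) together with returning $a$ to either block finishes. To produce a removable point, put the directions in general position (distinct, no two antipodal; degenerate cases below), let $\gamma_1,\dots,\gamma_n$ be the consecutive angular gaps (so $\sum_i\gamma_i=2\pi$, and depth $\ge 2$ reads $\gamma_i+\gamma_{i+1}\le\pi$ for all $i$), and set $T_i:=\gamma_{i-1}+\gamma_i+\gamma_{i+1}$, so $\sum_iT_i=6\pi$. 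One checks that a closed semicircle meets $D$ in exactly $\{\hat v_i,\hat v_{i+1}\}$ precisely when $T_i>\pi$, whence $v_i$ is removable iff $T_{i-1}\le\pi$ and $T_i\le\pi$. If no $v_i$ were removable, $\{\,i:T_i\le\pi\,\}$ would contain no two consecutive indices, hence lie inside a maximum independent set $I$ of the $n$-cycle; then every $i\notin I$ has $T_i>\pi$, so $\sum_{i\notin I}T_i>\tfrac{n+1}{2}\pi\ge 4\pi$ (as $n\ge 7$), while $\sum_{i\in I}T_i=\sum_k\gamma_k\,\lvert\{k-1,k,k+1\}\cap I\rvert\ge\sum_k\gamma_k=2\pi$ (each index is counted at least once, by maximality of $I$) forces $\sum_{i\notin I}T_i=6\pi-\sum_{i\in I}T_i\le 4\pi$ — a contradiction.

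The step I expect to be the real nuisance is the degenerate configurations, where two points of $A$ are collinear with $\p$. An antipodal pair $\hat a=-\hat b$ is in fact easy: if $A\setminus\{a,b\}$ lies in no open half-plane through $\p$ then $\{a,b\}\cupdot(A\setminus\{a,b\})$ already works, and if it lies in one, with boundary line $\ell\ni\p$, then either $a$ or $b$ lies strictly on the same side of $\ell$ as $A\setminus\{a,b\}$ — so $\conv(A\setminus\{a\})$ or $\conv(A\setminus\{b\})$ lies in that open side and cannot contain $\p$, contradicting depth $\ge 2$ — or $\ell$ is the line $ab$ and then $\conv(A\setminus\{a\})$ meets $\ell$ only at $b$, forcing $\p=b\in A$. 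Points merely sharing a direction with $\p$ are harmless for the even-$n$ block argument once ordered consistently, and in the odd case they only change which semicircles are ``exactly two'', so the removable-point count needs a small adjustment but no new idea. For completeness I note a second, genericity-free route to the whole lemma: take an inclusion-minimal $B\subseteq A$ with $\p\in\conv(B)$ (an edge, or a triangle with $\p$ in its interior), assume $A\setminus B$ lies in an open half-plane bounded by a line $\ell\ni\p$, and rule out each position of $\ell$ relative to $B$ using that the convex hull of a point of $\ell$ together with points strictly on one side of $\ell$ meets $\ell$ only in that point; the lone stubborn configuration — $\ell$ isolating two vertices of the minimal triangle on one side — appears to be exactly where the hypothesis $n\ge 6$ (rather than $n\ge 5$) is needed.
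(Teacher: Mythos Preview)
Your argument is correct in substance and takes a genuinely different route from the paper in the odd case. The even-$n$ argument (circular alternation of two labels) coincides with the paper's. For odd $n\ge 7$, the paper instead splits on whether $\p$ has depth three or depth exactly two: if depth three, the alternating labeling with a single doubled label works directly; if depth exactly two, the paper locates a closed half-plane $H_+$ with $|H_+\cap A|=2$ and writes down an explicit relabeling on the complementary half-plane, mirroring the second half of the proof of Lemma~\ref{Tplane1}. Your approach---find a \emph{removable} point via an averaging argument on the gap-triples $T_i$ and reduce to the even case---is a pleasant alternative that avoids the case split, though it is less constructive and, as you note, requires separate handling of non-generic direction configurations, whereas the paper's explicit labelings go through unchanged in the presence of coincident or antipodal directions.

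Two repairs are needed. First, an independent set of the $n$-cycle need not extend to a \emph{maximum} independent set (e.g.\ $\{1,4,7\}$ in $C_9$ is maximal of size $3<4$); replace ``maximum'' by ``maximal''. This is harmless: a maximal independent set in $C_n$ with $n$ odd still has size at most $(n-1)/2$ and is dominating, so both inequalities $\sum_{i\notin I}T_i>\tfrac{n+1}{2}\pi$ and $\sum_{i\in I}T_i\ge 2\pi$ survive intact. Second, the coincident-direction case is only gestured at; it can be finished (for instance, when $\gamma_i=0$ the point $v_i$ is removable unless some closed semicircle meets $D$ only in the repeated direction, which forces that direction to have multiplicity exactly two and reduces to a short direct check), but as written it is a sketch rather than a proof. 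Your antipodal-pair argument, by contrast, is complete: the implicit step ``depth $\ge 2$ in $A$ implies $\p\in\conv(A\setminus\{x\})$ for every single $x$'' is a valid consequence of the depth hypothesis and does the work you need.
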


\begin{proof}[Proof of special case $S = \Z^2$.]
Consider a multiset $A$ of at least $4m-3$ points in $\mathbb{Z}^2$. By Lemma~\ref{centerpt}, $A$ has an integer point $\p$ of depth $m$. If $\p$ is an element of $A$ with multiplicity $\mu$, then take the singletons $\{\p\}$ as $\mu$ of the sets in the Tverberg partition. Then $\p$ is a point of depth $m-\mu$ of the remaining $4m-\mu-3$ points. If $\mu \geq m$, we are done, and if $\mu = m-1$, the point $\p$ is in the convex hull of the remaining points and we take them to be the last set in the desired partition. If $\mu \leq m-3$, according to Lemma~\ref{Tplane1}, there is an $(m-\mu)$-Tverberg partition of the remaining points with $\p$ as Tverberg point. There is thus an $m$-Tverberg partition of $A$ with $\p$ as Tverberg point. The case $\mu = m - 2$ is treated similarly with the help of Lemma~\ref{Tplane2} in place of Lemma~\ref{Tplane1}.
\end{proof}

\begin{proof}[Proof of Lemma~\ref{Tplane1}]
Since $\p$ is not in $A$, up to a radial projection, we can assume that the points of $A$ are arranged in a circle around $\p$. Define $q$ and $r$ to be respectively the quotient and the remainder of the Euclidean division of $|A|$ by $m$. Define moreover $e$ to be $\lceil \frac{r}{q} \rceil$.

Suppose first that $\p$ is a point of depth $m + e$. In such a case, we arbitrarily select  a first point in $A$, and label clockwise the points with elements in $[m]$ according to the following pattern:
\[1, 2, \ldots, m, 1, 2, \ldots, e , 1 , 2 , \ldots, m , 1, 2, \ldots, e , 1, 2 , \ldots m, 1, 2 , \ldots k,\]
where $k=|A|-qm-(q-1)e$. Note that we have $k\leq e$.
Each half-plane delimited by a line passing through $\p$ contains at least $m+e$ consecutive points in this pattern and thus has at least one point with each of the $m$ different labels. Partitioning the points so that each subset consists of all points with a fixed label, we therefore obtain an $m$-Tverberg partition with $\p$ as Tverberg point.

Suppose now that $\p$ is not a point of depth $m + e$. There is thus a closed half-plane $H_+$, delimited by a line passing through $\p$, with $|H_+ \cap A| < m + e$. The complementary closed half-plane to $H_+$, which we denote by $H_-$, is such that $|H_- \cap A| >  4m-3 - (m+e)$. Define $\ell$ to be $|H_- \cap A|$. Since $e\leq \frac m 3$, we have $\ell\geq 2m$. Denote the points in $H_- \cap A$ by $\x_1,\ldots,\x_{\ell}$, where the indices are increasing when we move clockwise. We label $\x_i$ with $r + i$ from $\x_1$ to $\x_{m-r}$, and then label $\x_{m-r+j}$ with $j$ from $\x_{m-r +1}$ to $\x_m$. We then continue labeling the points of $A$, still moving clockwise, using labels $1, 2, \ldots , m , \ldots, 1, 2 , \ldots m, 1, 2, \ldots r.$ See Figure~\ref{fig:labeling} for an illustration of the labeling scheme.

\begin{figure}
  \centering
  \includegraphics[width=10cm ]{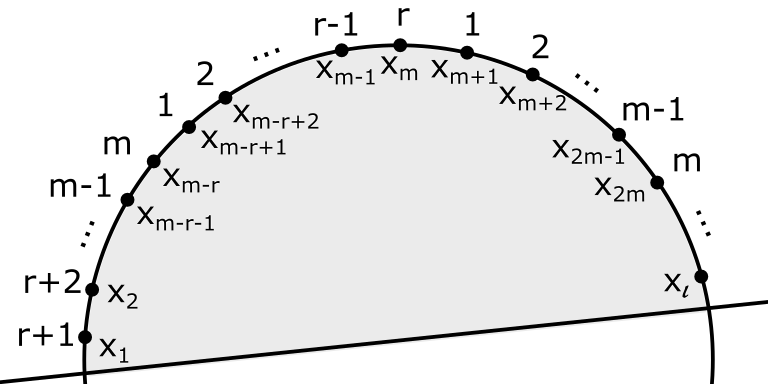}
  \caption{Labeling of the points in the half-plane $H_-$.}
  \label{fig:labeling}
\end{figure}
The labeling pattern is such that any sequence of $m$ consecutive points either has all $m$ labels, or contains the two consecutive points $\x_m$ and $\x_{m+1}$. Let us prove that any closed half-plane $H$ delimited by a line passing through $\p$ contains at least one point with each label. Once this is proved, the conclusion will be immediate by taking as subsets of points those with same labels, as above.

If such an $H$ does not simultaneously contain $\x_m$ and $\x_{m+1}$, then $H$ contains at least one point with each label. Consider thus a closed half-plane $H$ delimited by a line passing through $\p$ and containing $\x_m$ and $\x_{m+1}$. Note that according to Farkas' lemma (\cite{Sch03} Theorem 5.3) , $\x_{m+1}$ cannot be separated from $\x_1$ and $\x_{\ell}$ by a line passing through $\p$, since they are all in $H_-$. This means that either $H$ contains $\x_1, \x_2, \ldots, \x_{m + 1}$, or $H$ contains $\x_{m+1}, \x_{m+2}, \ldots, \x_{\ell}$. In any case, $H$ contains a point with each label.
\end{proof}

\begin{proof}[Proof of Lemma \ref{Tplane2}]
As before, we assume that the points in $A$ are arranged on a circle centered at $\p$. If $|A|$ is even, it clearly suffices to label the points in order, alternating between 1 and 2. We may therefore assume that $|A|$ is odd, and thus $|A| \geq 7$. If $\p$ is a point of depth three, it suffices to label the points alternating labels between 1 and 2, except with two consecutive points labeled 1.
If $|A|$ is odd but $\p$ is not a point of depth three, then $|A| \geq 7$ and there is a half-plane $H_+$ containing $\p$ with $|H_+ \cap A| = 2$. The complementary half-plane $H_-$ has $|H_- \cap A| \geq 5$ and we follow a similar strategy as in the second half of Lemma~\ref{Tplane1}. Namely, we denote the points in $H_- \cap A$ by $\x_1,\ldots,\x_{\ell}$, where the indices are increasing when we move clockwise. Then we label $\x_1$ with 2, $\x_2$ with 1, $\x_3$ with 1, and $\x_4$ with 2. We continue this pattern for $\alpha \geq 5$, labeling $\x_\alpha$ with 1 if $\alpha$ is odd, and $\x_\alpha$ with 2 if $\alpha$ is even. For the remaining points in $A$ we continue labeling clockwise, alternating between the labels 1 and 2. 

The labeling pattern is such that any sequence of $2$ consecutive points either has both labels, or contains the two consecutive points $\x_2$ and $\x_3$. As in Lemma~\ref{Tplane1} it suffices to show that any closed half-plane $H$ delimited by a line passing through $\p$ contains at least one point with each label. 

If such an $H$ does not simultaneously contain $\x_2$ and $\x_3$, then $H$ contains at least one point with each label. Consider thus a closed half-plane $H$ delimited by a line passing through $\p$ and containing $\x_2$ and $\x_3$. Note that according to Farkas' lemma, $\x_{3}$ cannot be separated from $\x_1$ and $\x_4$ by a line passing through $\p$, since they are all in $H_-$. This means that either $H$ contains $\x_1, \x_2, \x_3$, or $H$ contains $\x_{3}$ and $\x_4$. In any case, $H$ contains a point with each label.
\end{proof}

\subsection{Proof of the general case}~\label{TSA}

The proof of the general case is split into three lemmas addressing the lower bound, the upper bound for $\He(S) \geq 4$, 
and the upper bound for $\He(S) \leq 3$, respectively.

\begin{lemma}~\label{Stverberglowerbound} For any discrete set $S\subset \R^2$ with finite Helly number $\He(S)$, we have $\Tv(S,m) > \He(S)(m-1)$.
\end{lemma}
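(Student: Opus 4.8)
The plan is to exhibit, for every $m \geq 2$, a multiset $A$ of exactly $\He(S)(m-1)$ points in $S$ that admits no $m$-Tverberg partition with Tverberg point in $S$. The natural source of such a configuration is the definition of the Helly number itself: since $\He(S)$ is the smallest integer with the stated intersection property, there must exist a finite family $\mathcal{F}$ of convex sets in $\R^2$ such that every subfamily of size at most $\He(S)-1$ has a common point in $S$, yet $\cap\mathcal{F}$ misses $S$ entirely. A standard compactness/reduction argument (replacing each convex set by a half-plane supporting it near the "missing" region, as in the usual proof that Helly numbers are witnessed by half-spaces) lets me assume $\mathcal{F}$ consists of exactly $\He(S)$ half-planes $H_1,\dots,H_{\He(S)}$ whose total intersection contains no point of $S$, while any $\He(S)-1$ of them do. For each $i$, pick a point $a_i \in S$ lying in all the half-planes except possibly $H_i$ — more precisely in $\bigcap_{j\neq i} H_j$, which is nonempty in $S$ by minimality of $\He(S)$ — and in fact I want $a_i$ to violate $H_i$, i.e.\ $a_i \notin H_i$; this can be arranged because if every point of $\bigcap_{j\neq i}H_j \cap S$ also lay in $H_i$, that intersection would already avoid $S$ only when intersected with... (here one uses that $\bigcap_{j} H_j \cap S = \varnothing$ but $\bigcap_{j\neq i}H_j \cap S \neq \varnothing$, so some witnessing point lies outside $H_i$).

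Now let $A$ be the multiset consisting of each $a_i$ taken with multiplicity $m-1$, so $|A| = \He(S)(m-1)$. Suppose for contradiction that $A$ has an $m$-Tverberg partition $A_1,\dots,A_m$ with common point $\p \in S$. The key combinatorial observation is a pigeonhole on the points $a_i$: each $a_i$ appears $m-1$ times, so it can be distributed among at most $m-1$ of the $m$ blocks $A_1,\dots,A_m$; hence for each index $i$ there is at least one block, say $A_{\sigma(i)}$, that contains no copy of $a_i$. Consequently that block $A_{\sigma(i)}$ is contained in $\{a_j : j \neq i\} \subseteq H_i$ (since every $a_j$ with $j\neq i$ lies in $H_i$), and because $H_i$ is convex, $\conv(A_{\sigma(i)}) \subseteq H_i$. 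Therefore $\p \in \conv(A_{\sigma(i)}) \subseteq H_i$. As $i$ ranges over all of $\{1,\dots,\He(S)\}$ we conclude $\p \in \bigcap_{i=1}^{\He(S)} H_i$, and since $\p \in S$ this contradicts $\bigcap_i H_i \cap S = \varnothing$. Hence no such partition exists and $\Tv(S,m) > \He(S)(m-1)$.

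The step I expect to be the main obstacle is the reduction of the Helly witness to a clean family of $\He(S)$ half-planes together with the points $a_i$ having the precise "inside all but $H_i$, outside $H_i$" property; the bare definition of $\He(S)$ gives a family $\mathcal{F}$ of convex sets but not automatically half-planes, and not automatically a single distinguished witness point per dropped set. Handling this rigorously requires either citing a known structural result (that discrete Helly numbers in the plane are attained by half-space families, cf.\ the references on $S$-Helly numbers in the remark after Theorem~\ref{Stverberg}) or a short direct argument: take a point $p^\ast \in \bigcap\mathcal{F}$ in $\R^2$ (nonempty as a real intersection, else replace $\mathcal{F}$ by a minimal non-$S$-intersecting subfamily), note $p^\ast \notin S$, and separate — using that $S$ is discrete so a small ball around $p^\ast$ meets $S$ in finitely many points — each such nearby lattice point from $\bigcap\mathcal{F}$ by a half-plane drawn from the supporting hyperplanes of the $C_j$'s; a minimal such subfamily of half-planes has size $\He(S)$ by the definition, and minimality yields the per-index witness points $a_i$. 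Everything after this reduction is the elementary pigeonhole above.
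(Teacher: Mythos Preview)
Your pigeonhole argument in the second paragraph is correct and complete, but the half-plane reduction you spend the first and third paragraphs worrying about is entirely unnecessary. Starting from the definition of $\He(S)$, take a \emph{minimal} finite family $\mathcal{F}$ of convex sets such that every $(\He(S)-1)$-subfamily meets $S$ but $\bigcap\mathcal{F}$ does not. Minimality forces $|\mathcal{F}|=\He(S)$ (if it were larger, every proper subfamily---in particular every $\He(S)$-subfamily---would meet $S$, and then the Helly property itself would force $\bigcap\mathcal{F}$ to meet $S$). Writing $\mathcal{F}=\{C_1,\dots,C_h\}$, pick $a_i\in\bigl(\bigcap_{j\neq i}C_j\bigr)\cap S$; automatically $a_i\notin C_i$ (else $a_i\in\bigcap_j C_j\cap S$), so the $a_i$ are distinct. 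Your pigeonhole paragraph now runs verbatim with $C_i$ in place of $H_i$: the block missing $a_i$ sits inside $C_i$, hence so does $\p$, and $\p\in\bigcap_i C_i\cap S=\varnothing$. No half-planes, no compactness, no separation argument needed.

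This route is genuinely different from the paper's. The paper cites an external structural result (Lemma~2.6 of \cite{amentaetal2017helly}) producing $\He(S)$ points $R'\subseteq S$ in convex position with $\conv(R')\cap S=R'$, takes each with multiplicity $m-1$, and then argues that no point of $S$ has half-space depth $m$ with respect to this multiset (any candidate Tverberg point must have depth $m$, so none exists). Your approach is more self-contained: it needs nothing beyond the definition of $\He(S)$ and works in any ambient dimension, whereas the paper's proof imports a lemma about empty polytopes. The paper's version, on the other hand, yields the slightly stronger statement that the witnessing multiset has no $S$-point of depth $m$ at all, which is in line with how the rest of Section~\ref{TS} is organised around depth. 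Both arguments ultimately build the same multiset (your $a_i$'s are in convex position and span an $S$-empty polygon), just with different bookkeeping.
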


\begin{proof} It suffices to exhibit a subset $R \subseteq S$, of cardinality $|R| = \He(S)(m-1)$, with the property that no point in $S$ is of half-space depth $m$ with respect to $R$. By Lemma 2.6 in \cite{amentaetal2017helly}, there exists a set $R'$ of $\He(S)$ points in $S$ in convex position with the property that $\conv(R')\cap S = R'$. Let $R$ be the multiset given by taking each point in $R'$ with multiplicity $(m-1)$, so $|R| = \He(S)(m-1)$. No points of $S-R$ are in $\conv(R)$. Since $R'$ was taken to be in convex position, for any point in $R$, there exists a line such that one side of that line has at most $m-1$ points in $R$. Thus $S$ cannot contain a point of half-space depth $m$ with respect to $R$.
\end{proof}

\begin{lemma}~\label{helly4} For any discrete set $S\subset \R^2$ with finite Helly number $\He(S) \geq 4$, we have $\Tv(S,m) \leq \He(S)(m-1) + 1$ whenever $m \geq 3$. For the case $m = 2$, we have $\Tv(S,2) \leq \He(S) + 2$. 
\end{lemma}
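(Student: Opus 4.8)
The plan is to mimic the structure of the $S = \Z^2$ argument from Section~\ref{TZ2}, replacing the "depth $m$" hypothesis supplied there by Lemma~\ref{centerpt} with a Helly-type statement tailored to a general discrete planar $S$. The key first step is to establish the right analogue of Lemma~\ref{centerpt}: given a multiset $A$ of at least $\He(S)(m-1)+1$ points of $S$, there is a point $\p \in S$ of half-space depth $m$ with respect to $A$. This should follow by a standard LP-duality / Helly argument: for each direction in which $A$ fails to have $m$ points, one gets an open half-space $H$ with $|H \cap A| \le m-1$; the complementary closed half-spaces $\overline{H^c}$ are convex, and if no point of $S$ had depth $m$ then finitely many such half-spaces would have empty common intersection with $S$, so by the definition of $\He(S)$ some $\He(S)$ of them already have empty intersection with $S$ — but the complements of those remove at most $\He(S)(m-1)$ points of $A$, contradicting $|A| \ge \He(S)(m-1)+1$. (This is exactly the $d=2$ specialization of the centerpoint-from-Helly principle.)

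Once such a $\p \in S$ is in hand, I would split on whether $\p \in A$. If $\p$ occurs in $A$ with multiplicity $\mu$, peel off $\min(\mu, m-1)$ singleton blocks $\{\p\}$ exactly as in the proof of the $S=\Z^2$ case; the residual multiset $A'$ has $|A'| \ge \He(S)(m-1)+1-\mu$ points and $\p \notin A'$ is of depth $m-\mu$ with respect to $A'$ — and crucially $\He(S) \ge 4 > 1$, so the arithmetic $|A'| \ge \He(S)((m-\mu)-1)+1$ still holds in all the relevant sub-cases. So it suffices to prove: if $\p \notin A$ is a point of depth $m$ (resp.\ depth two) with respect to a multiset $A \subset S$ with $|A| \ge \He(S)(m-1)+1$ (resp.\ $|A| \ge \He(S)+2$), then $A$ admits an $m$-Tverberg (resp.\ Radon) partition with Tverberg point $\p$. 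Here I would reuse verbatim the radial-projection-plus-circular-labeling machinery of Lemmas~\ref{Tplane1} and \ref{Tplane2}: radially project $A$ to a circle around $\p$, and label the points cyclically so that every closed half-plane through $\p$ captures a point of each label, handling the "bad half-plane $H_+$" case by the same $\x_m,\x_{m+1}$ (resp.\ $\x_2,\x_3$) Farkas-lemma trick. The point is that those two lemmas' combinatorics only used the depth value and the cardinality of $A$, never the specific fact that $S = \Z^2$; one just checks that the inequality $|A| \ge \He(S)(m-1)+1$ with $\He(S)\ge 4$ gives enough points (e.g.\ $|A| \ge 4(m-1)+1 = 4m-3$ when $m\ge 3$, and $|A|\ge \He(S)+2 \ge 6$ when $m=2$) to run the same case analysis.

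The main obstacle is getting the $m=2$ bookkeeping exactly right: the statement only claims $\Tv(S,2) \le \He(S)+2$ (not $+1$), so for $|A| = \He(S)+2$ one must verify that the parity dichotomy of Lemma~\ref{Tplane2} still goes through — the $\He(S) \ge 4$ hypothesis guarantees $|A| \ge 6$, hence at least $7$ in the odd case, which is precisely the threshold that makes the depth-three labeling trick and its non-depth-three fallback work. A secondary technical point is confirming that the depth-$m$ point produced by the Helly argument can be chosen outside $A$ when needed, or else absorbing it via the multiplicity peeling as above; I would organize the write-up so that the peeling step reduces every case to the "$\p \notin A$" lemmas, just as the $S=\Z^2$ proof does, so that no separate treatment of "$\p \in A$, small multiplicity" is needed beyond the transparent arithmetic check.
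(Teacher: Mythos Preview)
Your proposal is correct and follows essentially the same route as the paper: the paper's proof of Lemma~\ref{helly4} likewise reduces to the $S=\Z^2$ argument by replacing Lemma~\ref{centerpt} with the Helly-based centerpoint (citing \cite{basu+oertel} rather than sketching the LP-duality argument as you do), and then invokes Lemmas~\ref{Tplane1} and~\ref{Tplane2} verbatim together with the multiplicity-peeling reduction. Your observation that those two lemmas use only the depth and cardinality hypotheses, and your explicit check that $\He(S)\ge 4$ supplies the needed thresholds $|A|\ge 4m-3$ and $|A|\ge 6$, are exactly the points the paper leaves implicit.
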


\begin{proof}
The proof of Lemma~\ref{helly4} is the same as the proof of Theorem 1. In particular, we can use Lemmas 2 and 3 as they are stated, except that we use the following result (Theorem 2 in \cite{basu+oertel} with $\mu$ being the uniform probability measure on $A$) in place of Lemma~\ref{centerpt}. For any discrete discrete subset $S$ of a Euclidean space with finite Helly number $\He(S)$, and any set $A \subseteq S$ with $|A| \geq \He(S) (m-1) + 1$, there  exists a point $\p \in S$ that is of half-space depth $m$ with respect to $A$.
\end{proof}

\begin{lemma} For a discrete set $S\subset \R^2$ with finite Helly number $\He(S) \leq 3$, we have $\Tv(S,m) \leq \He(S)(m-1) + 1$.
\end{lemma}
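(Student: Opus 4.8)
The plan is to mimic the structure of the proof of Lemma~\ref{helly4}, but exploit the fact that a discrete planar set with $\He(S)\le 3$ behaves, from the point of view of half-space depth, essentially like $\R^2$ itself (whose Helly number is $3$). First I would invoke the appropriate centerpoint statement: by Theorem~2 in \cite{basu+oertel} applied with the uniform measure on $A$, any $A\subseteq S$ with $|A|\ge \He(S)(m-1)+1$ contains a point $\p\in S$ of half-space depth $m$ with respect to $A$. So it suffices to prove a planar Tverberg statement of the following shape: if $A$ is a multiset in $S$ and $\p\in S$ is a point of half-space depth $m$ with respect to $A$, with $|A|\ge \He(S)(m-1)+1$, then there is an $m$-Tverberg partition of $A$ with Tverberg point $\p$. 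As in the $\Z^2$ case, one first peels off the singletons $\{\p\}$ if $\p$ has positive multiplicity $\mu$ in $A$, reducing to the case $\p\notin A$ with depth $m-\mu$ among the remaining at least $\He(S)(m-\mu-1)+1$ points (the boundary cases $\mu\in\{m-1,m-2\}$ being handled by a direct convex-hull argument and by the Radon analogue, exactly as in the $\Z^2$ proof).

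For the core case $\p\notin A$, I would split on the value of $\He(S)$. If $\He(S)\le 2$, then in fact every point of $S$ in $\conv(A)$ gives a very strong conclusion and the bound $2(m-1)+1$ is easy via a circular-labeling argument as in Lemma~\ref{Tplane2} (indeed $\He(S)\le 2$ forces $S$ to be very restrictive, essentially collinear or a single point, so one should dispatch that case quickly or by direct inspection). The substantive case is $\He(S)=3$: here the target is $\Tv(S,m)\le 3(m-1)+1=3m-2$, which is precisely the classical Tverberg number of $\R^2$. So the key step is to rerun the proof of Lemma~\ref{Tplane1} verbatim, with ``$4m-3$'' replaced by ``$3m-2$'': after radially projecting $A$ onto a circle around $\p$, we choose a cyclic labeling by $[m]$ so that every closed half-plane through $\p$ sees all $m$ labels. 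The only thing to recheck is that the half-plane-of-small-size branch still goes through with the smaller point budget: one needs that the complementary half-plane $H_-$ still has $\ell\ge$ (roughly) $2m$ points so that the two-block labeling around the ``seam'' $\x_m,\x_{m+1}$ works. Since $\p$ has depth $m$, a deficient half-plane has fewer than $m+e$ points, so $\ell> (3m-2)-(m+e)=2m-2-e$, and with $e=\lceil r/q\rceil$ small (here $q=\lfloor(3m-2)/m\rfloor$, so $q=2$ for $m\ge 2$, giving $r=m-2$ and $e=\lceil (m-2)/2\rceil\le (m-1)/2$), one still gets $\ell\ge 2m-2-e\ge$ the amount needed; I would carefully verify this inequality and the depth-$(m+e)$ alternative branch, adjusting the labeling pattern constants if necessary.

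The main obstacle I anticipate is precisely this arithmetic tightening: the $\Z^2$ argument has slack because it starts from $4m-3$ points, whereas here we have only $3m-2$, so the ``$e$'' overhead and the length of $H_-$ must be controlled more delicately, and it is conceivable that the naive labeling needs a small modification near the seam (e.g.\ distributing the surplus $r$ differently, or using $e'=\lfloor r/q\rfloor$ in one of the two sub-blocks) to guarantee that every half-plane through $\p$ still catches every label. A secondary point to be careful about is that, unlike in $\Z^2$, we are not told $\conv(A)\cap S$ is large — but we do not need that: the only place discreteness of $S$ enters is in producing the depth-$m$ point $\p\in S$ via \cite{basu+oertel}, and once $\p$ is fixed the labeling argument is purely about the combinatorics of points on a circle and lines through $\p$, so it transfers without change. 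Finally, for $m=2$ one checks the bound $\He(S)+1=4$ directly: four points in convex position with no three of them together with $\p$ in a half-plane forces a Radon-type split, which follows from the same Farkas-lemma observation used in Lemma~\ref{Tplane2}.
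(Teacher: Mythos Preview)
Your approach differs substantially from the paper's, and in the crucial case $\He(S)=3$ it contains a genuine gap that cannot be repaired by ``adjusting constants.''

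The paper does \emph{not} reproduce a Birch-type labeling argument for $3m-2$ points. Instead, for $\He(S)=3$ it applies the classical Tverberg theorem in $\R^2$ as a black box: any $3m-2$ points admit an $m$-Tverberg partition with convex hulls $K_1,\dots,K_m$, and the nonempty intersection $Q=\bigcap_i K_i$ is a polygon. One then picks a vertex $\q$ of $Q$ and shows $\q\in S$. Either $\q$ is a vertex of some $K_i$ (hence $\q\in A\subseteq S$), or $\q$ is the crossing point of two edges $[\ba,\bb]$ and $[\bc,\bd]$ with $\ba,\bb,\bc,\bd\in S$; among the four triangles $\conv(\{\ba,\bb,\bc\})$, $\conv(\{\ba,\bb,\bd\})$, $\conv(\{\ba,\bc,\bd\})$, $\conv(\{\bb,\bc,\bd\})$ any three share a vertex and hence meet $S$, so by $\He(S)=3$ all four meet $S$ at a common point, namely $\q$. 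Thus $\He(S)=3$ is used to locate the Tverberg point inside $S$, not to manufacture a centerpoint.

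Your plan, by contrast, rests on the implication ``$\p$ has half-space depth $m$ in $A$ with $|A|\ge 3m-2$ $\Rightarrow$ $\p$ is an $m$-Tverberg point of $A$.'' This implication is \emph{false}. Take $m=3$ and let $A$ be the seven vertices of a regular heptagon centered at $\p$. Then $\p$ has depth exactly $3$ (a generic closed half-plane through $\p$ contains three vertices), yet $\p$ is not a $3$-Tverberg point: in any $3$-partition of $A$ some part has at most two points, and since no two vertices of a regular heptagon are antipodal, the convex hull of every one- or two-element subset of $A$ misses $\p$. So the ``tightening'' you anticipate in the second branch of the Lemma~\ref{Tplane1} argument is not a matter of bookkeeping --- once the point budget drops from $4m-3$ to $3m-2$, the underlying statement you need simply fails. (Your treatment of $\He(S)\le 2$ is fine and matches the paper's.)
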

\begin{proof}
The case $\He(S) = 1$ implies that $S$ consists of a single point, so the result trivially follows. If $\He(S) = 2$, it must be that all points in $S$ are collinear (as any set containing a non-degenerate triangle has Helly number at least 3), and thus we can take median of any set with at least $2(m-1) + 1$ points in $S$ as the desired $m$-Tverberg point. Thus for the remainder of the proof we assume that $\He(S) = 3$.

Given any set $A$ of $\He(S)(m-1) = 3m-2$ points in $S$, there exists an $m$-Tverberg partition, say $\mathcal{P}$ by the classical Tverberg theorem. We denote by $K_1, \dots, K_m$ the $m$ convex hulls of the subsets in $\mathcal{P}$. As $\bigcap_{1 \leq i \leq m} K_i$ is a nonempty polygon, say $Q$, (possibly just a point or line segment) we pick an arbitrary vertex $\q$ of $Q$.

It suffices to show that $\q \in S$. We can assume that $\q$ is not a vertex of any $K_i$, since otherwise $\q \in A \subseteq S$.

Since $\q$ is a vertex of $Q$, it must be contained in a one dimensional face $F_1$ of at least one $K_i$. Since $\q$ is not a vertex of any $K_i$, in fact $\q$ is in the relative interior of $F_1$. For $\q$ to be a vertex of $Q$, it must also be in another one dimensional face, say $F_2$, of some other $K_i$, such that $F_1$ is not parallel to $F_2$. Moreover, $\q$ must be in the relative interior of $F_2$, and we also have $F_1 \cap F_2 = \{\q\}$.

Denote by $\{\ba,\bb\}$ and $\{\bc,\bd\}$ the vertices of $F_1$ and $F_2$ respectively. We have that $\ba,\bb,\bc,\bd \in S$ are the vertices of a convex quadrilateral with diagonals intersecting at $\q$, by the assumption that $F_1$ and $F_2$ are non parallel. Out of the four triangles $\conv(\{\ba,\bb,\bc\}), \conv(\{\ba,\bb,\bd\}), \conv(\{\ba,\bc,\bd\}), \conv(\{\bb,\bc,\bd\})$, any three have at least one vertex in common, and therefore intersect in $S$. Since $\He(S) = 3$, the four triangles therefore all intersect in $S$. This intersection point is $\q$, the point where the diagonals of the quadrilateral intersect. 
\end{proof}

\section{Tverberg Numbers over $\Z^3$: Proof of Theorem~\ref{3dZ}}\label{TZ3}

We state the following lemma without proof; it is a consequence, upon close inspection of the argument, of the proof of the main theorem in the already mentioned paper by Bezdek and Blokhuis~\cite{Bezdek2003}.

\begin{lemma}\label{BB}
Consider a multiset $A$ of at least $17$ points in $\mathbb{R}^3$ and a point $\p$ of depth $3$ in $A$. There is a bipartition of $A$ into two subsets whose convex hulls contain $\p$.
\end{lemma}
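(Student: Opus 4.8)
The plan is to extract the integrality-free core of the Bezdek--Blokhuis proof that $\Tv(\Z^3,2)\le 17$~\cite{Bezdek2003}. That proof runs in two stages: given $17$ points in $\Z^3$, one first produces an \emph{integer} point $\p$ of half-space depth $3$ --- possible by a centerpoint-type statement, namely the $d=3$, $m=3$ instance of Lemma~\ref{centerpt}, since $2^3(3-1)+1=17$ --- and one then builds a Radon partition of the $17$ points having $\p$ as Radon point. Integrality enters only in the first stage; the second stage is a statement purely about half-space depth and convexity in $\R^3$, invoking nothing beyond ``$\p$ has depth $3$'' and ``there are at least $17$ points''. So the task is to isolate that second stage and check that it carries over to an arbitrary multiset $A$, which is exactly what the lemma asserts.

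I would start with two reductions. First, if $\p\in A$, delete one copy of $\p$ to obtain a multiset $A'$; then $\p$ still has depth at least $2$ in $A'$, so $\p\in\conv(A')$, and $\bigl(\{\p\},A'\bigr)$ is already a valid bipartition of $A$ --- hence we may assume $\p\notin A$. Second, exactly as in the proofs of Lemmas~\ref{Tplane1} and~\ref{Tplane2}, radially project the points of $A$ onto a sphere $\mathbb{S}^2$ centered at $\p$; this changes neither the half-space depth of $\p$ nor the membership of $\p$ in the convex hull of any subset. In this picture, a bipartition of $A$ with $\p$ in both convex hulls is precisely a $2$-coloring of the projected points in which every closed hemisphere meets both color classes, while the hypothesis that $\p$ has depth $3$ says that every closed hemisphere contains at least $3$ of the points. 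The goal becomes: $2$-color a (multi)set of at least $17$ points on $\mathbb{S}^2$, every closed hemisphere of which contains at least $3$ of them, so that no closed hemisphere is monochromatic. This is what one verifies the Bezdek--Blokhuis analysis produces.

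The heart of the matter --- and the reason the lemma is quoted rather than reproved from scratch --- is this spherical coloring. The natural attempt, by analogy with the cyclic ``$1,2,1,2,\dots$, with one repeated label'' device of Lemmas~\ref{Tplane1}--\ref{Tplane2}, is to order the points along great circles, color by a cyclic rule, and then run through a finite list of configurations, using in each the bound ``every closed hemisphere has at least $3$ points'' to exclude a monochromatic hemisphere. The obstacle, which is genuinely two-dimensional, is that a closed hemisphere of $\mathbb{S}^2$ is not pinned down by two consecutive points of a cyclic order the way a half-plane is determined on a circle; the depth-$3$ hypothesis must therefore be used in a more global fashion, and the case analysis is correspondingly intricate. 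This is the step where one must follow \cite{Bezdek2003} closely rather than improvise.

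Finally, two routine checks remain before the argument applies in the generality stated: that it tolerates more than $17$ points, and that it tolerates coincident points and configurations not in general position. Neither causes trouble --- extra or repeated points only make hemispheres fuller, so every inequality used in the case analysis is only reinforced, and the coloring rule extends verbatim to any number of points; degeneracies are handled by checking that the argument does not secretly assume general position, or by a limiting argument. Since integrality of the points is used nowhere in these reductions or in the coloring construction, they produce the desired bipartition for every multiset $A\subset\R^3$ with $|A|\ge 17$ and every point $\p$ of depth $3$ in $A$, which is Lemma~\ref{BB}.
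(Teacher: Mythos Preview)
Your proposal is correct and matches the paper's own treatment: the paper states the lemma without proof, noting only that it follows ``upon close inspection of the argument'' in Bezdek--Blokhuis~\cite{Bezdek2003}, and your write-up is precisely that close inspection --- isolating the depth-based second stage, performing the same radial-projection reduction used elsewhere in the paper, and observing that integrality plays no role there. You have supplied more detail than the paper does (the $\p\in A$ reduction, the hemisphere reformulation, the robustness checks), but the underlying approach is identical.
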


Next, we prove the following.

\begin{lemma}\label{BBplus}
Consider a multiset $A$ of points in $\R^3$ with $|A| \geq 24m-31$ and $m\geq 2$. If $\p \notin A$ is a point of depth $3m-3$, then there is an $m$-Tverberg partition of $A$ with $\p$ as Tverberg point.
\end{lemma}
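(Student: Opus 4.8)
The strategy is to reduce to Lemma~\ref{BB} (the $m=2$ case, which gives the Tverberg number $17$ for pairs in $\mathbb{R}^3$ around a point of depth $3$) by peeling off one part of the partition at a time, while keeping careful track of how much depth is consumed at each step. Specifically, I would argue by induction on $m$. The base case $m=2$ is exactly Lemma~\ref{BB}, since $24\cdot 2 - 31 = 17$ and depth $3m-3 = 3$. For the inductive step, suppose the result holds for $m-1$ and let $A$ be a multiset with $|A| \geq 24m-31$ and let $\p \notin A$ have depth $3m-3$ in $A$. The idea is to extract a single subset $A_1 \subseteq A$ with $\p \in \conv(A_1)$ using as few points as possible, then apply the induction hypothesis to $A \setminus A_1$.

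The key quantitative question is: how many points must $A_1$ contain, and how much does removing $A_1$ cost in depth? Since $\p$ has depth $3m-3$ in $A$, by Steinitz's theorem (or Carath\'eodory for the cone, combined with the depth hypothesis) one can find a small subset of $A$ whose convex hull contains $\p$; in $\mathbb{R}^3$ a point of positive depth is in the convex hull of at most some bounded number of points (at most $6$ by Steinitz, or handled more cleverly using depth). The delicate point is that removing $A_1$ must leave $\p$ with depth at least $3(m-1)-3 = 3m-6$ in $A \setminus A_1$, so $A_1$ can contain at most $3$ points on any open half-space boundary side — more precisely, $|A_1|$ consuming the depth must drop it by at most $3$. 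So I would aim to choose $A_1$ with $|A_1| \leq 24$ and such that $A_1$ meets every closed halfspace through $\p$ in at most $3$ points (equivalently, removing $A_1$ drops depth by at most $3$): then $|A \setminus A_1| \geq 24m - 31 - 24 = 24(m-1) - 31$ and $\p$ has depth at least $3m-3-3 = 3(m-1)-3$ in $A \setminus A_1$, so the induction hypothesis applies and produces an $(m-1)$-Tverberg partition of $A \setminus A_1$ with Tverberg point $\p$; adjoining $A_1$ gives the desired $m$-Tverberg partition.

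The main obstacle, and the step requiring the most care, is producing such a set $A_1$: one needs a subset of $A$ that (i) contains $\p$ in its convex hull, (ii) is small ($\leq 24$ points), and (iii) removing it costs at most $3$ in half-space depth. A natural approach is to first find, via Steinitz / Carath\'eodory, a subset $B \subseteq A$ of at most $6$ points with $\p$ in its convex hull; if the depth drop from removing $B$ is already at most $3$ we are done with $A_1 = B$. If not, removing $B$ drops the depth more, which would happen only if $B$ is concentrated near some half-space through $\p$; one then argues using the depth-$3m-3$ hypothesis that $A$ is "spread out" enough around $\p$ that $B$ can be chosen to avoid this concentration, or that a slightly larger but still bounded $A_1$ (absorbing extra points to balance the halfspaces) works. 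Getting the constant $24$ to come out exactly right is where the bookkeeping — mirroring the techniques of \cite{integertverberg2017} — must be done carefully; I would expect the argument to choose $A_1$ of size exactly governed by ``$17 - $ (something)'' versus the recursion $24m - 31 = 24(m-1) - 31 + 24$, reconciling the additive constants from Lemma~\ref{BB} with the per-step cost.
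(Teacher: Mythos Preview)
Your inductive strategy is correct and is essentially an unrolled version of the paper's argument: the paper peels off $m-2$ small subsets $X_1,\dots,X_{m-2}$ all at once and then applies Lemma~\ref{BB} to the remainder, while you propose to peel off one subset per inductive step. Unwinding your induction down to the base case $m=2$ yields exactly the paper's bookkeeping.

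However, your proposal leaves as ``the main obstacle'' precisely the step that turns out to be routine, and the surrounding arithmetic is off. The key observation you are missing is this: since $\p$ has positive depth, $\p\in\conv(A)$, so by Carath\'eodory there is a subset $A_1\subseteq A$ with $|A_1|\leq 4$ (not $6$ via Steinitz, and certainly not $24$) that is \emph{minimal} with $\p\in\conv(A_1)$. Minimality forces $\p$ into the relative interior of the simplex $\conv(A_1)$, and then for any closed half-space $H_+$ bounded by a hyperplane through $\p$ one has $|H_+\cap A_1|\leq 3$: if all vertices of $A_1$ lay in $H_+$, the simplex would lie in $H_+$, putting the relatively interior point $\p$ either strictly inside $H_+$ (impossible, $\p$ is on the boundary) or forcing $\conv(A_1)$ into the boundary hyperplane (impossible, contradicts minimality). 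Hence removing $A_1$ drops the half-space depth of $\p$ by at most $3$, automatically. There is no need to worry about ``concentration'' or to enlarge $A_1$; any minimal Carath\'eodory witness works.

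With this in hand the induction closes immediately: $|A\setminus A_1|\geq 24m-31-4 \geq 24(m-1)-31$ and $\p$ has depth at least $3m-6=3(m-1)-3$ in $A\setminus A_1$. The paper phrases the same computation non-inductively: it greedily extracts $m-2$ such minimal subsets (arguing by contradiction, using the depth hypothesis, that this many are available), leaving at least $24m-31-4(m-2)=20m-23\geq 17$ points in which $\p$ has depth at least $3m-3-3(m-2)=3$, and then invokes Lemma~\ref{BB} once.
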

\begin{proof}
Since $\p$ is not an element of $A$, we assume without loss of generality that the points of $A$ are located on a sphere centered at $\p$, as in the proof of Theorem~\ref{Stverberg}. 

We claim that there exist pairwise disjoint subsets $X_1, X_2, \ldots, X_{m-2}$ of $A$, each having $\p$ in its convex hull and each being of cardinality at most $4$. (Here ``pairwise disjoint'' means that each element of $A$ is present in a number of $X_i$'s that does not exceed its multiplicity in $A$.) We proceed by contradiction. Suppose that we can find at most $s < m-2$ such subsets $X_i$'s. Then, by Carath\'eodory's theorem, $\p$ is not in the convex hull of the remaining points in $A$. Therefore there is a half-space $H_+$ delimited by a plane containing $\p$ such that 
$H_+ \cap A \subseteq \bigcup_{i=1}^s X_i$. On the other hand, since each $X_i$ contains $\p$ in its convex hull (and we can assume the $X_i$ are minimal with respect to containing $\p$), we have $|H_+ \cap X_i |\leq 3$ for all $i\in[s]$. Therefore $|H_+ \cap A| \leq \left|H_+ \cap  \left(\bigcup_{i=1}^s X_i\right)\right| \leq 3s < 3(m-2)$, which is a contradiction since $\p$ is a point of depth $3m-3$ in $A$. There are thus $m-2$ disjoint subsets $X_1, X_2, \ldots, X_{m-2}$ as claimed.

Let $X$ denote $\bigcup_{i=1}^{m-2}X_i$. Consider an arbitrary half-space $H_+$ delimited by a plane containing $\p$. Since $|H_+ \cap X_i| \leq 3$ for all $i$, we have $|H_+ \cap X| \leq 3(m-2)$. Furthermore $|H_+ \cap A| \geq 3m-3$, so $|H_+ \cap (A \setminus X) | \geq 3$. Since $H_+$ is arbitrary, $\p$ is a point of depth $3$ of $A\setminus X$. Also, $|A\setminus X| \geq |A| - 4(m-2) \geq 20m- 23 \geq 17$, so Lemma~\ref{BB} implies that $A\setminus X$ can be partitioned into two sets whose convex hulls contain $\p$. With the subsets $X_i$, we have therefore an $m$-Tverberg partition of $A$, with $\p$ as Tverberg point.
\end{proof}
From these two lemmas we can now finish the proof of Theorem~\ref{3dZ}.
\begin{proof}[Proof of Theorem~\ref{3dZ}]
Consider a multiset $A$ of $24m-31$ points in $\mathbb{Z}^3$. The case $m=2$ is the already mentioned result by Bezdek and Blokhuis. Assume that $m \geq 3$. Applying Lemma~\ref{centerpt}, $A$ has an integer point $\p$ of depth $3m-3$. If $\p$ is an element of $A$ with multiplicity $\mu$, then take the singletons $\{\p\}$ as $\mu$ of the sets in the Tverberg partition. 

If $\mu\geq m$, we are done. If $\mu=m-1$, the point $\p$ is still in the convex hull of points in $A$, and thus we are done. And if $\mu \leq m-2$, the point $\p$ is still a point of depth $3m-\mu-3\geq 3(m-\mu)-3$ of the remaining $24m-\mu-31 \geq 24(m-\mu)-31$ points. Thus, we may apply Lemma~\ref{BBplus} to get an $(m-\mu)$-Tverberg partition of the remaining points, with $\p$ as Tverberg point, and conclude the result.
\end{proof}
\section{Tverberg Numbers over $Q \times \R^k$: Proof of Theorem \ref{mixedint}}\label{CONS}
In this section, we prove Theorem~\ref{mixedint}. We adapt an approach by Mulzer and Werner~\cite[Lemma 2.3]{MW13} and show how the results of our paper 
can be combined to improve known bounds and to determine new exact values for the Tverberg number in the mixed integer case, as well as better bounds for certain $S$-Tverberg numbers. 
\begin{proof}[Proof of Theorem \ref{mixedint}] Let $t = \Tv(\R^k,m) = (m-1)(k+1) + 1$. Choose a multiset $A$ in $S' \times \R^k$ with $|A| \geq\Tv(S',t)$. It suffices to prove that $A$ can be partitioned into $m$ subsets whose convex hulls contain a common point in $S' \times \R^k$.

Let $A'$ be the projection of $A$ onto $S'$. Since $|A'| \geq\Tv(S' , t)$, there is a partition of $A'$ into $t$ submultisets $Q'_1,\ldots,Q'_t$ whose convex hulls contain a common point $\q$ in $S'$. The $Q'_i$ are the projections onto $S'$ of $t$ disjoint subsets $Q_i$ forming a partition of $A$.  For each $i\in[t]$, we can find a point $\q_i\in\conv(Q_i)$ projecting onto $\q$.

The $t$ points $\q_1,\ldots,\q_t$ belong to $\{\q\}\times\R^k$. As $t=\Tv(\R^k,m)$, there exists a partition of $[t]$ into $I_1,\ldots,I_m$ and a point $\p\in\{\q\}\times\R^k$ such that $\p\in\conv\left(\bigcup_{i\in I_{\ell}}\q_i\right)$ for all $\ell\in[m]$. For each $\ell\in [m]$, define $A_{\ell}$ to be $\bigcup_{i\in I_{\ell}}Q_i$. We have, for each $\ell\in[m]$
\[\p\in\conv\left(\bigcup_{i\in I_{\ell}}\q_i\right)\subseteq\conv\left(\bigcup_{i\in I_{\ell}}\conv(Q_i)\right)=\conv(A_{\ell})\]
 and the $A_{\ell}$ form the  desired partition.
\end{proof}

Here are the new bounds and exact values we get:

\begin{enumerate}
\item $\Tv(\mathbb{Z} \times \mathbb{R}^k,m) = 2(m-1)(k+1) +1$.
\item $\Tv(\mathbb{Z}^2 \times \mathbb{R}^k,m) = 4(m-1)(k+1)+1$.
\item $\Tv(\mathbb{Z}^3 \times \mathbb{R}^k,m) \leq 24(m-1)(k+1) -7$.
\item\label{last} $2^j(m-1)(k+1) + 1 \leq \Tv(\Z^j \times \R^k,m) \leq j2^j(m-1)(k+1) + 1$.
\item If $S' \subset \R^2$ with finite Helly number $\He(S')$, then $$\Tv(S' \times \mathbb{R}^k,m) \leq \He(S')(m-1)(k+1)+1.$$
\end{enumerate}

The lower bound in \eqref{last} is obtained by repeated applications of Proposition~\ref{stack} below. The upper bounds follow from Theorem~\ref{mixedint}, combined with the fact that $\Tv(\Z,m) = 2m-1$, Theorem~\ref{Stverberg} for $S = \Z^2$, Theorem~\ref{3dZ}, the upper bound in Equation~\eqref{DL}, and Theorem~\ref{Stverberg} respectively.
\begin{proposition}~\label{stack}
Let $j$ and $k$ be two non-negative integers. Then we have 
 \[\Tv(\Z^{j+1} \times \R^k,m) > 2\Tv(\Z^j \times \R^k, m)-2.\]
\end{proposition}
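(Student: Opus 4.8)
The plan is to establish the lower bound by a direct "lifting" construction: from a bad configuration in $\Z^j \times \R^k$ that has no $m$-Tverberg partition, build a bad configuration in $\Z^{j+1} \times \R^k$ that is roughly twice as large. Let $N = \Tv(\Z^j \times \R^k, m) - 1$, so there exists a multiset $A \subset \Z^j \times \R^k$ with $|A| = N$ admitting no $m$-Tverberg partition (no point of $\Z^j \times \R^k$ lies in the intersection of the convex hulls over any partition into $m$ parts). We want to produce a multiset $B \subset \Z^{j+1} \times \R^k$ with $|B| = 2N - 2$ and no $m$-Tverberg partition, which yields $\Tv(\Z^{j+1} \times \R^k, m) > 2N - 2 = 2\Tv(\Z^j \times \R^k,m) - 2$.

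First I would fix a vector in $A$, say $\ba$, and remove it, leaving a multiset $A_0$ of size $N-1$. Then I would form two copies of $A_0$, lifting one copy into the hyperplane $\{x_{j+1} = 0\}$ and the other into $\{x_{j+1} = 1\}$ of $\Z^{j+1} \times \R^k$; call these $B_0$ and $B_1$, each of size $N-1$, so $B = B_0 \cup B_1$ has size $2N-2$. The idea is that any integer point $\q \in \Z^{j+1} \times \R^k$ has last-but-$k$ coordinate $q_{j+1} \in \Z$, hence $q_{j+1} \le 0$ or $q_{j+1} \ge 1$. If $\q$ were a Tverberg point for a partition of $B$ into $m$ parts $B^{(1)}, \dots, B^{(m)}$, then for each part $B^{(\ell)}$, since $\q \in \conv(B^{(\ell)})$ and all of $B$ has $x_{j+1} \in \{0,1\}$, the part $B^{(\ell)}$ must actually contain a point with $x_{j+1} \le q_{j+1}$ and a point with $x_{j+1} \ge q_{j+1}$; when $q_{j+1} \le 0$ this forces each part to meet $B_0$, and when $q_{j+1} \ge 1$ it forces each part to meet $B_1$. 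Say $q_{j+1}\le 0$; projecting the parts' intersections with $B_0$ back down to $\Z^j \times \R^k$ and projecting $\q$ down to $\bar\q \in \Z^j \times \R^k$, one checks that $\bar\q$ lies in the convex hull of the projection of $B_0^{(\ell)}$ for every $\ell$. This would give an $m$-Tverberg partition of $A_0$ (hence of $A_0 \cup \{\ba\} = A$, by adjoining $\ba$ to any part) with Tverberg point $\bar\q \in \Z^j \times \R^k$ — contradicting the choice of $A$.

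The subtle point — and the step I expect to be the main obstacle — is the claim that when $q_{j+1} \le 0$ every part $B^{(\ell)}$ must meet $B_0$ (equivalently, contain a vertex with $x_{j+1} = 0$). It is clear that $B^{(\ell)}$ must contain a point with $x_{j+1} \le q_{j+1} \le 0$, and since all points of $B$ have $x_{j+1} \in \{0,1\}$, such a point has $x_{j+1} = 0$, i.e. it lies in $B_0$; so this direction is in fact immediate once one is careful that $\conv(B^{(\ell)})$ being below a coordinate value forces an extreme point below it. The real care is needed to guarantee that the projected configuration $\pi(B_0^{(\ell)})$ is nonempty for all $\ell$ simultaneously and that $\bar\q$ lies in all $m$ of the projected hulls; this follows from the previous sentence applied to each $\ell$, together with the fact that the linear projection $\pi$ commutes with taking convex hulls, so $\bar\q = \pi(\q) \in \pi(\conv(B^{(\ell)})) = \conv(\pi(B^{(\ell)})) \subseteq \conv(\pi(B_0)) = \conv(A_0)$, and more precisely $\bar\q \in \conv(\pi(B_0^{(\ell)}))$ once we know $B_0^{(\ell)} \ne \varnothing$. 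Handling the symmetric case $q_{j+1} \ge 1$ identically with $B_1$ completes the argument. Finally, the only reason for deleting the single vector $\ba$ at the start is bookkeeping: $B_0$ and $B_1$ each being a copy of $A_0$ makes $|B| = 2(N-1) = 2N - 2$ match the claimed bound exactly, and adjoining $\ba$ back to a part in the reconstructed partition of $A$ is harmless since adding points to a set only enlarges its convex hull.
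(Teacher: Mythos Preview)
Your approach is essentially the paper's: lift a bad configuration $A$ in $\Z^j\times\R^k$ to two copies at heights $0$ and $1$ in the new integer coordinate, and argue that any integer Tverberg point of the lifted set must sit in one of the two slices, yielding a Tverberg partition of the corresponding copy of $A$.

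However, there is a genuine arithmetic slip that makes your bound too weak. You set $N=\Tv(\Z^j\times\R^k,m)-1$ and then \emph{remove} a point $\ba$ before doubling, so $|B|=2(N-1)=2N-2$. But
\[
2N-2=2\bigl(\Tv(\Z^j\times\R^k,m)-1\bigr)-2=2\Tv(\Z^j\times\R^k,m)-4,
\]
not $2\Tv(\Z^j\times\R^k,m)-2$ as you claim; your construction therefore only proves the weaker inequality $\Tv(\Z^{j+1}\times\R^k,m)>2\Tv(\Z^j\times\R^k,m)-4$. The removal of $\ba$ is entirely unnecessary: since $A$ itself has no $m$-Tverberg partition, simply lift all of $A$ to $B_0$ and $B_1$, obtaining $|B|=2N=2\Tv(\Z^j\times\R^k,m)-2$, and the same argument goes through (this is exactly what the paper does).

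There is also a gap in the justification of the key step. You need that when $q_{j+1}\le 0$ (in fact $q_{j+1}=0$, since $\conv(B)\subseteq\{0\le x_{j+1}\le 1\}$) the containment $\q\in\conv(B^{(\ell)})$ implies $\q\in\conv(B_0^{(\ell)})$, not merely $B_0^{(\ell)}\ne\varnothing$. Neither ``projection commutes with convex hulls'' nor nonemptiness of $B_0^{(\ell)}$ gives this: from $\bar\q\in\conv(\pi(B^{(\ell)}))$ you cannot conclude $\bar\q\in\conv(\pi(B_0^{(\ell)}))$, since $\pi(B_1^{(\ell)})$ may contribute points of $A_0$ that are not in $\pi(B_0^{(\ell)})$. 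The correct argument: write $\q=\sum_i\lambda_i\mathbf{v}_i$ as a convex combination from $B^{(\ell)}$; the $(j{+}1)$-th coordinate gives $0=\sum_i\lambda_i(\mathbf{v}_i)_{j+1}$ with each $(\mathbf{v}_i)_{j+1}\in\{0,1\}$, forcing $\lambda_i=0$ whenever $\mathbf{v}_i\in B_1$. Hence $\q\in\conv(B_0^{(\ell)})$, and projecting gives $\bar\q\in\conv(\pi(B_0^{(\ell)}))$ as needed.
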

We prove Proposition~\ref{stack} by following the idea of the proof of Proposition 2.1 in \cite{Onn:1991wz}.
\begin{proof}[Proof of Proposition~\ref{stack}]
Assume toward a contradiction that
$$\Tv(\Z^{j+1} \times \R^k,m) \leq 2\Tv(\Z^j \times \R^k, m)-2.$$ Choose
$A$ to be a set of $\Tv(\Z^j \times \R^k,m) -1$ points in $\Z^j \times \R^k$ with no $m$-Tverberg partition. 
Let $A_i = \{(\boldsymbol{a},i) \colon \boldsymbol{a} \in A,i = \{0,1\}\}.$ Since $A_0 \cup A_1 \subset \Z^{j+1} \times \R^k$ has cardinality $2\Tv(\Z^j \times \R^k, m)-2$, there exists an $m$-Tverberg partition $Y_1, Y_2, \dots, Y_m$ of $A_0 \cup A_1$ with $\p \in \bigcap_{i \in [m]} \conv(Y_i)$. Furthermore $\p$ is in $\Z^{j+1} \times \R^k$. That implies either $\p \in \conv(A_0)$ or $\p \in \conv(A_1)$. In either case $A_0$ or $A_1$ has an $m$-Tverberg partition, a contradiction with our choice of $A$. 
\end{proof}

\section{A Generalized Fraction Selection Lemma: Proof of Theorem~\ref{thm:depthcovering}}\label{IPFSL}

Our proof relies on 
the   simplicial partition theorem of Matou\v{s}ek, used in a  similar
manner as in~\cite{MR17}, which states  the following.
\begin{theorem}[\cite{M92}; see also~\cite{Cha00}]~\label{simplicialpartition}
 Given an integer $d \geq 1$ and a parameter $r$, there exists a constant $c_d \geq 1$ such that
for any set $P$ of $n$ points in $\Re^d$, 
there exists an integer $s$
and a partition $\left\{P_1, \ldots, P_{s}\right\}$ of $P$ such that
\begin{itemize}
\item for each $i = 1, \ldots, s$, $\frac{n}{r} \leq |P_i|  \leq   \frac{2n}{r} $, and
\item any hyperplane intersects the convex hull of less than $c_d \cdot r^{1-\frac{1}{ d }}$ sets of the partition. 
\end{itemize}
The constant $c_d$ is independent of $P$ and depends only on $d$.
\end{theorem}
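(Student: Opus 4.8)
The statement is the classical simplicial partition theorem, so the plan is to recall a streamlined form of Matou\v{s}ek's proof~\cite{M92}, the one geometric engine being the Cutting Lemma for hyperplanes~\cite{Cha00}. First I would replace the infinite family of hyperplanes by a finite \emph{test set} $H$: whether a hyperplane meets $\conv(P_i)$ depends only on the sign vector it induces on the points of $P_i$, and every sign vector on $P$ is realized by a hyperplane spanned by $d$ points of $P$ (after an arbitrarily small perturbation), so $|H| = O(n^d)$ representatives suffice, and it is enough to bound, for each $h \in H$, the number of classes whose convex hull $h$ crosses. Set $t := \lceil n/r \rceil$, so that each class will have at least $n/r$ and at most about $2n/r$ points.

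Next I would construct the classes greedily while reweighting the test set. Keep a weight $w(h) \ge 1$ on each $h \in H$, all equal to $1$ initially, and write $W := \sum_{h \in H} w(h)$. At a generic step, with $Q$ the set of points not yet placed in a class and $m := |Q|$, apply the weighted Cutting Lemma to $H$ with parameter $\rho := (m/(c_0 t))^{1/d}$, where $c_0 = c_0(d)$ is the cell-count constant of the lemma: this produces at most $c_0 \rho^d = m/t$ cells of bounded complexity covering $\R^d$, each crossed by hyperplanes of total weight at most $W/\rho$. By averaging, one cell $\sigma$ contains at least $m/(c_0 \rho^d) = t$ points of $Q$; let $P_i$ consist of $t$ of them. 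Since $\sigma$ is convex and contains $P_i$, every hyperplane that meets $\conv(P_i)$ meets $\sigma$, hence the total weight of hyperplanes crossing $\conv(P_i)$ is at most $W/\rho = c_0^{1/d}(t/m)^{1/d}\, W$. Now double $w(h)$ for every $h \in H$ crossing $\conv(P_i)$, delete $P_i$ from $Q$, and repeat as long as $m$ exceeds a suitable constant multiple of $t$ (which keeps $\rho \ge 1$); the $O_d(t)$ leftover points are split into $O_d(1)$ classes of size between $n/r$ and $2n/r$. Every class then has size in the required range, and the number $s$ of classes is $\Theta(r)$.

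The heart of the argument is the amortized bound that produces the exponent $1 - \tfrac{1}{d}$. After $i-1$ classes, exactly $(i-1)t$ points have been removed, so at step $i$ we have $m_i = n - (i-1)t$ and $t/m_i = 1/(r - i + 1)$. The doubling step multiplies $W$ by a factor at most $1 + c_0^{1/d}(t/m_i)^{1/d}$, so the final total weight is at most $\exp(c_0^{1/d}\sum_i (r - i + 1)^{-1/d})$, and since $\sum_{j=1}^{r} j^{-1/d} = O(r^{1-1/d})$ for $d \ge 2$ (the case $d = 1$ being trivial), this is $\exp(O(r^{1-1/d}))$. Finally, a hyperplane $h \in H$ that crosses the convex hulls of $k$ classes has had its weight doubled $k$ times, so $2^{k} = w(h) \le W_{\mathrm{final}}$, giving $k \le \log_2 W_{\mathrm{final}} = O(r^{1-1/d})$; this is the claimed bound, with $c_d$ read off from the implied constants.

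The main obstacle is precisely this matching of exponents. A coarser tool than the Cutting Lemma, say an $\varepsilon$-net of size $O(\varepsilon^{-1}\log\varepsilon^{-1})$ in place of the $O(\rho^d)$-cell cutting, would lose an extra logarithmic factor and only deliver $O(r^{1-1/d}\log r)$ crossings; what makes the clean bound work is that the cutting has \emph{exactly} $O(\rho^d)$ cells each absorbing only a $1/\rho$ share of the weight, combined with the fact that the series $\sum_j j^{-1/d}$ grows like $r^{1-1/d}$ rather than $\log r$, which is what cancels the logarithm coming from the weight-doubling potential. A fully self-contained proof would in addition have to establish the Cutting Lemma itself, via the Clarkson--Shor / Chazelle--Friedman random-sampling method; that is the one genuinely substantial input, and everything above it is bookkeeping.
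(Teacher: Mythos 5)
This statement is quoted by the paper from Matou\v{s}ek's work (\cite{M92}; see also \cite{Cha00}) and is not proved there, so the comparison is with the standard literature proof, which your sketch indeed follows in outline (finite test set, weighted cutting lemma, greedy extraction of a heavy cell, multiplicative reweighting). However, as written there is a genuine gap in the final accounting. Your test set has $|H| = O(n^d)$ hyperplanes, each with initial weight $1$, so the initial total weight is $W_0 = O(n^d)$, and the correct conclusion of the doubling analysis is $W_{\mathrm{final}} \leq W_0 \cdot \exp\bigl(O(r^{1-1/d})\bigr)$, not $\exp\bigl(O(r^{1-1/d})\bigr)$ as you state. Consequently $k \leq \log_2 W_{\mathrm{final}} = O\bigl(d\log n + r^{1-1/d}\bigr)$, i.e.\ you obtain a crossing number $O(r^{1-1/d} + \log n)$ rather than $c_d\, r^{1-1/d}$ with $c_d$ independent of $n$. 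This is not a cosmetic loss: in the present paper the theorem is applied with $r = \bigl\lceil (4c_d/\alpha)^d \bigr\rceil$, a constant depending only on $d$ and $\alpha$, and there an $O(\log n)$ crossing bound is worthless --- the whole point of Lemma~\ref{lemma:depthpartitioning} is that fewer than $2c_d r^{1-1/d}$ classes are crossed, with $r$ constant, so that the count in \eqref{eq:upperbound} stays below $\alpha n$.

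The missing ingredient is precisely the step of Matou\v{s}ek's proof that replaces the naive $O(n^d)$ test set by a \emph{small} test set of size polynomial in $r$ (constructed via cuttings/duality), so that $\log_2 |H| = O(\log r) = O(r^{1-1/d})$ for $d \geq 2$ and the initial weight is absorbed; an alternative is a careful bootstrapping for small $r$, but naive composition of constant-$r$ partitions only yields $O(r^{1-1/d+\varepsilon})$. Either way, this is a substantive piece of the argument, comparable in weight to the cutting lemma you flag as the ``one genuinely substantial input,'' and your sketch does not supply it. The remaining bookkeeping (choice of $\rho$ so that some cell holds $t$ points, the $1 + c_0^{1/d}(t/m_i)^{1/d}$ growth factor, $\sum_j j^{-1/d} = O(r^{1-1/d})$, handling of the $O_d(t)$ leftover points) is fine.
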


We now prove the following key lemma.

\begin{lemma}
For any integer $d \geq 1$, there exists a constant
$c_{d}$ such that the following holds.
For any set $P$ of $n$ points in $\mathbb{R}^d$ and a real number $\alpha \in (0, 1]$,
there exists a partition $\mathcal{P} = \left\{ P_1, \ldots, P_{r} \right\}$,   
$r = \left\lceil \left(\frac{4c_d}{\alpha}\right)^d \right\rceil$, of
$P$ such that
\begin{itemize}
\item $\frac{n}{2 r} \leq |P_i|  \leq \frac{2n}{r} $ for each $i = 1, \ldots, r$, and 
\item the convex hull of any transversal $Q$ of $\mathcal{P}$ contains all
points in $\mathbb{R}^d$ of half-space depth at least $\alpha \cdot n$.
\end{itemize}
\label{lemma:depthpartitioning}
\end{lemma}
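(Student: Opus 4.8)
The plan is to apply the simplicial partition theorem (Theorem~\ref{simplicialpartition}) with a carefully chosen value of the parameter so that the ``crossing number'' bound $c_d \cdot r^{1 - 1/d}$ becomes a sufficiently small fraction of the number of parts. Concretely, I would first invoke Theorem~\ref{simplicialpartition} with parameter $r_0 = \lceil (4c_d/\alpha)^d \rceil$ (reusing the constant $c_d$ from that theorem), obtaining a partition $\{P_1', \ldots, P_s'\}$ with $n/r_0 \le |P_i'| \le 2n/r_0$ and with any hyperplane meeting the convex hulls of fewer than $c_d r_0^{1-1/d}$ of the parts. Since each part has size between $n/r_0$ and $2n/r_0$, we get $s \ge r_0/2$; set $r = \lceil r_0/2 \rceil$ or simply keep all $s$ parts and note $s$ is of the right order. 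The key numerical point is that $c_d r_0^{1-1/d} = c_d r_0 / r_0^{1/d} \le c_d r_0 \cdot \frac{\alpha}{4 c_d} = \alpha r_0 / 4 \le \alpha s / 2$, so \emph{fewer than half} the parts are crossed by any given hyperplane.

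\textbf{From crossing to containment.} Next I would translate the crossing bound into a depth statement. Let $\q$ be any point of half-space depth at least $\alpha n$, and suppose for contradiction that $\q \notin \conv(Q)$ for some transversal $Q = \{\mathbf{q}_1, \ldots, \mathbf{q}_s\}$ with $\mathbf{q}_i \in P_i'$. Then there is a hyperplane $h$ strictly separating $\q$ from $\conv(Q)$; let $H$ be the open half-space bounded by $h$ containing $\q$. For every part $P_i'$ that $h$ does \emph{not} cross, $\conv(P_i')$ lies entirely in the closed half-space opposite to $\q$, in particular $P_i' \cap H = \varnothing$. Since $h$ crosses fewer than half the parts, at least half the parts are disjoint from $H$, so $|P \cap H| \le (\text{number of crossed parts}) \cdot \max_i |P_i'| < \frac{\alpha s}{2} \cdot \frac{2n}{r_0} \le \alpha n$ (using $s \le r_0$). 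This contradicts the assumption that $\q$ has half-space depth at least $\alpha n$, since $H$ is a half-space containing $\q$ with fewer than $\alpha n$ points of $P$. Hence $\q \in \conv(Q)$ for every transversal.

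\textbf{Balancing the part sizes.} The partition from Theorem~\ref{simplicialpartition} already gives $|P_i'| \le 2n/r_0$, but the lower bound $n/r_0$ and the possibly-odd count $s$ need massaging to land exactly on the statement's form with $r$ parts and lower bound $n/(2r)$. I would merge and/or split parts to reduce to exactly $r = \lceil (4c_d/\alpha)^d \rceil$ parts: since $s \ge r_0/2 \ge r/$(constant) and each original part has size $\ge n/r_0$, a greedy regrouping of the $s$ small parts into $r$ groups yields sizes in $[n/(2r), 2n/r]$ (the upper bound is only mildly affected because we never put more than a bounded number of original parts together, and one can be careful to keep the totals balanced). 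Crucially, any transversal of the \emph{coarsened} partition contains a transversal of the original fine partition (pick one point from the relevant original sub-part), so the containment property is inherited. I would need to double-check the constant bookkeeping here so that the $c_d$ in the lemma statement absorbs the merging losses.

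\textbf{Main obstacle.} The conceptual content is entirely in the first two paragraphs and is essentially routine once one chooses the parameter correctly. The genuinely fiddly part is the third paragraph: getting the part sizes into the \emph{exact} window $[n/(2r), 2n/r]$ with \emph{exactly} $r$ parts while preserving the transversal-containment property, and confirming that all the constants can be folded into a single $c_d$ without circularity (the $c_d$ in the lemma will be a fixed function of the $c_d$ from Theorem~\ref{simplicialpartition}). I expect this to require a short but careful regrouping argument rather than any new idea; the sign of the inequality in the crossing-number estimate ($r_0^{1/d} \ge 4c_d/\alpha$, which is exactly why $r_0$ was defined with a $d$-th power) is what makes everything go through.
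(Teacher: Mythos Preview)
Your first two paragraphs are essentially the paper's argument: apply Theorem~\ref{simplicialpartition} with parameter $r_0=\lceil(4c_d/\alpha)^d\rceil$, then use the crossing bound $c_d r_0^{1-1/d}\le \alpha r_0/4$ together with the separating hyperplane to contradict the half-space depth of $\q$. The only cosmetic difference is that the paper carries out the depth argument on the \emph{final} partition (after adjusting the number of parts), observing that the crossing number at most doubles; you do it on the original $s$-part partition and then try to inherit. Either order works.

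Where you go wrong is the balancing step. You have $r_0/2\le s\le r_0$ parts and you want exactly $r=r_0$ parts, so you must \emph{split}, not merge: the paper simply halves each of the $r-s$ largest $P_i'$, which immediately yields $r$ parts with sizes in $[n/(2r),2n/r]$ and at most doubles the number of parts any hyperplane meets (this is why the constant is $4c_d$ rather than $2c_d$). Your description (``put original parts together'', ``coarsened partition'') has the direction reversed, and the inheritance claim you state is false for merging: if you merge $P_1'\cup P_2'$ into one block, a transversal of the merged partition may pick its single point from $P_1'$ and miss $P_2'$ entirely, so it need not contain a transversal of the original partition, and the containment of $\q$ is lost. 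The correct (and trivial) inheritance goes the other way: any transversal of a \emph{refinement} contains a transversal of the original. Once you replace ``merge'' by ``split the largest $r-s$ parts in two'', your argument lines up with the paper's and no further bookkeeping with constants is needed.
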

\begin{proof}
Apply the simplicial partition theorem (Theorem~\ref{simplicialpartition}) to $P$ with 
$r = \left\lceil \left(\frac{4c_d}{\alpha}\right)^d \right\rceil$, and
let the resulting partition be $\left\{P'_1, \ldots, P'_s\right\}$.  
Note that as $\frac{n}{r} \leq |P'_i|  \leq   \frac{2n}{r} $ for each $i = 1, \ldots, s$,
we have $\frac{r}{2} \leq s \leq r$. Now partition arbitrarily each of 
$r-s$ biggest sets in $\left\{P'_1, \ldots, P'_s\right\}$ into two equal parts,
and let the resulting partition be $\left\{P_1, \ldots, P_r\right\}$.  
Clearly each set of this partition has size in the interval $\left[ \frac{n}{2r}, \frac{2n}{r}\right]$.
This proves the first part.
Note also that each hyperplane intersects the convex hull of at most twice as many sets, i.e., 
less than $2 c_d \cdot r^{1-\frac{1}{ d }}$ sets of the partition $\left\{P_1, \ldots, P_r\right\}$.

To see the second part, let
$\cc$ be any  point of half-space depth at least $\alpha \cdot n$, and
$Q$ any transversal of $\P$. For contradiction, assume
that $\cc \notin \mathrm{conv}\left(Q\right)$. Then there exists
a hyperplane $H$ containing $\cc$ in one of its two open half-spaces, say $H^-$, 
and containing $\mathrm{conv}\left(Q\right)$ in the half-space $H^+$.
We will show that then there exists an index $i \in \left\{1, \ldots, r\right\}$
such that $P_i \subseteq H^-$. But then $P_i \cap Q = \varnothing$, a contradiction to
the fact that $Q$ is a transversal of $\P$.

It remains to show the existence of a set $P_i \in \P$ such that $P_i \subseteq H^-$. 
Towards this, we bound $\left| P \cap H^- \right|$. Each point  of $P$
lying in $H^-$  belongs to a set $P' \in \P$ such that either
\begin{itemize}
\item $P'\subseteq H^-$, in which case we are done, or
\item $P' \nsubseteq H^-$. As $H^-$ contains at least one point of $P'$, we must have $\mathrm{conv}\left(P' \right) \cap H \neq \varnothing$. As argued earlier, there are less than
$2 c_d \cdot r^{1-\frac{1}{d}}$ such sets.
\end{itemize}
Thus we have
\begin{align}
\left|P \cap H^- \right| <  2 c_d  \cdot r^{1-\frac{1}{d}} \cdot   \frac{2n}{r}   
= \frac{4 c_d\cdot n  }{\left\lceil  \left( \frac{4c_d }{\alpha}\right)^d\right\rceil^{\frac{1}{d}}}  \leq \alpha \cdot n.
\label{eq:upperbound}
\end{align}
On the other hand, as $\cc$ has half-space depth at least $\alpha \cdot n$ and $\cc \in H^-$, 
we have $\left| P \cap H^- \right| \geq \alpha n$,
a contradiction to inequality~(\ref{eq:upperbound}).
\end{proof}

\remark{In particular, for $r= \left\lceil \left(\frac{4c_d}{\alpha}\right)^d \right\rceil$, there exist at least $\left( \frac{n}{2r} \right)^{r}$ $r$-sized subsets, each of whose convex hull contains all integer
points of depth at least $\alpha \cdot n$.}

\medskip

\begin{proof}[Proof of Theorem~\ref{thm:depthcovering}.]
Given the point set $P$ in $\Re^d$ and a point $\q \in \Re^d$ of half-space depth 
$\alpha \cdot n$, apply
Lemma~\ref{lemma:depthpartitioning} with $P$ and $\alpha$ to get a partition consisting
of $r \geq d + 1$ sets, 
where $r = \left\lceil \left(\frac{4c_d}{\alpha}\right)^d \right\rceil$.
By discarding at most $\frac{n}{2}$ points of $P$, we can derive a partition on the remaining points of $P$, say
$\mathcal{P} = \left\{P_1, \ldots, P_r \right\}$, such that the $P_i$'s are equal-sized disjoint subsets of $P$,
i.e., $|P_i| = \frac{n}{2r}$ for all $i = 1, \ldots, r$. Furthermore, 
every transversal of $\mathcal{P}$ contains all points in $\Re^d$ of half-space depth at least $\alpha n$,
and thus $\q$.

For each transversal $Q$ of $\mathcal{P}$, 
the point $\q$ lies in the convex hull of $Q$, and by Carath\'eodory's theorem, there exists
a $(d+1)$-sized subset of $Q$ whose convex hull also contains $\q$. 
By the pigeonhole principle, there must exist $(d+1)$ sets of $\mathcal{P}$, say
the sets $P_1, \ldots, P_{d+1}$, such that at least
\begin{equation} 
\frac{ \left(\frac{n}{2r}\right)^r }{\binom{r}{d+1} \left(\frac{n}{2r}\right)^{r-(d+1)}} 
\geq \frac{ \left(\frac{n}{2r}\right)^{(d+1)}}{ \left(\frac{er}{d+1}\right)^{d+1}}
= \frac{1}{\left(\frac{er}{d+1}\right)^{d+1}} \cdot \prod_{i=1}^{d+1} |P_i|.
\label{eq:densityarg}
\end{equation}
distinct transversals of $\left\{P_1, \ldots, P_{d+1} \right\}$ contain $\q$. 

\begin{sloppypar}

The rest of the proof  follows the one of Pach~\cite{P98}. In brief, we view the $P_i$'s as parts of a $(d+1)$-partite hypergraph with vertices corresponding to points in  $P$ and a hyperedge corresponding to each transversal of $\mathcal{P}$ containing $\q$. As there are $\Omega\left(n^{d+1}\right)$ such transversals by inequality~(\ref{eq:densityarg}), we apply a weak form of the hypergraph version of Szemer\'edi's regularity
lemma (see \cite{M92} Theorem 9.4.1) to derive
the existence of constant-fraction sized subsets  $P'_1 \subseteq P_1, \ldots, P'_{d+1} \subseteq P_{d+1}$ such that the following is true, for some constant $c'_d$:

\mystate{for \emph{any}   $P''_1 \subseteq P'_1, \ldots, P''_{d+1} \subseteq P'_{d+1}$,  with $|P''_i| \geq c'_d \cdot |P'_i|$ for $i = 1, \ldots, d+1$, we have the property
that there exists at least one transversal of $\left\{ P''_1, \ldots, P''_{d+1} \right\}$ whose convex hull contains $\q$.}

Then the same-type lemma (\cite{Barany:1998gi} Theorem 2) applied
to $\left\{P'_1, \ldots, P'_{d+1}, \left\{\q\right\} \right\}$ 
gives constant-fraction sized subsets $X_1 \subseteq P'_1, \ldots, X_{d+1} \subseteq P'_{d+1}$ such that each transversal of
$\left\{X_1, \ldots, X_{d+1} \right\}$ has the same order type with respect to $\q$. 
\end{sloppypar}

We can set up the parameters for the same-type lemma and the weak regularity lemma such
that $|X_i| \geq c'_d \cdot |P'_i|$, for all $i = 1, \ldots, d+1$.
Then the weak regularity lemma implies that there exists 
at least one transversal of $\left\{X_1, \ldots, X_{d+1}\right\}$
that contains $\q$. However, as each transversal of $\left\{X_1, \ldots, X_{d+1}\right\}$ has the same order type, 
it must be that \emph{each} transversal
of $\left\{X_1, \ldots, X_{d+1} \right\}$   contains $\q$. These are the required subsets.

The size of each $X_i$ is a constant-fraction of $n$, say $|X_i| \geq c_{d, \alpha} \cdot n$,
where the constant $c_{d, \alpha}$ depends on the constants in inequality~(\ref{eq:densityarg}),  
in the weak regularity lemma and in the same-type lemma. All of these depend only on $\alpha$ and $d$.
\end{proof}

\section{Acknowledgments} 
This work was partially supported by NSF grant DMS-1440140, while the first and third authors were in residence at the 
Mathematical Sciences Research Institute in Berkeley,  California, during the Fall 2017 semester. The first and second 
authors were also partially supported by NSF grants DMS-1522158 and DMS-1818169. The fourth author was supported by 
ANR grant SAGA (ANR-14-CE25-0016). Last, but not least, we are grateful for the information and comments we received 
from  N. Amenta, G. Averkov, A. Basu, J.P. Doignon, P. Sober\'on, D. Oliveros, and S. Onn. In particular we thank Pablo Sober\'on 
for noticing that the same methods used in an earlier version could be extended to yield Theorem~\ref{Stverberg}.
\bibliographystyle{amsalpha}

\bibliography{references__1_}

\end{document}